\let\conjugatet\overline
\let\@fnsymbol\@arabic
\newcommand{\id}{{\mathbf{\mathbbm{1}}}}
\newcommand{\tr}{{\rm tr}}
\newcommand{\dev}{{\rm dev}}
\newcommand{\Sym}{{\rm Sym}}
\newcommand{\sym}{{\rm sym}}
\newcommand{\skw}{{\rm skew}}
\newcommand{\Div}{{\rm Div}}
\newcommand{\axl}{{\rm axl}}
\newcommand{\anti}{{\rm \textbf{Anti}}}
\newcommand{\so}{\mathfrak{so}}
\newcommand{\norm}[1]{\|#1\|}
\def\dd{\displaystyle}
\newtheorem{theorem}{Theorem}[section]
\newtheorem{lemma}[theorem]{Lemma}
\newtheorem{proposition}[theorem]{Proposition}
\newtheorem{definition}[theorem]{Definition}
\def\barr{\begin{array}}
	\def\earr{\end{array}}
\def\bec#1{\begin{equation}\label{#1}}
\def\becn{\begin{equation*}}
\def\endec{\end{equation}}
\def\endecn{\end{equation*}}
\def\dd{\displaystyle}
\def\bfm#1{\mbox{\boldmat}}
\begin{document}
	
	\title{\vspace*{-1.75cm}   {Propagation} of Love waves in  {linear elastic} isotropic  Cosserat materials}
	
\author{Marius Apetrii\thanks{Marius Apetrii,\ Department of Mathematics,  Alexandru Ioan Cuza University of Ia\c si,  Blvd.
		Carol I, no. 11, 700506 Ia\c si,
		Romania; and  Octav Mayer Institute of Mathematics of the
		Romanian Academy, Ia\c si Branch,  700505 Ia\c si, email:  mapetrii@uaic.ro},\quad  Emilian Bulgariu\thanks{ Emilian Bulgariu,  {``Ion Ionescu de la Brad'' Iasi University of Life Sciences, Romania,  email:  emilian.bulgariu@iuls.ro}}, \quad  Ionel-Dumitrel Ghiba\thanks{ Ionel-Dumitrel Ghiba, \ Department of Mathematics,  Alexandru Ioan Cuza University of Ia\c si,  Blvd.
		Carol I, no. 11, 700506 Ia\c si,
		Romania; and  Octav Mayer Institute of Mathematics of the
		Romanian Academy, Ia\c si Branch,  700505 Ia\c si, email:  dumitrel.ghiba@uaic.ro}, \qquad Hassam Khan\thanks{Hassam Khan,    Department of Mathematics and Statistics,  Institute of Business and Management Sciences (IOBM), Karachi. Pakistan, email: dr.hassam@iobm.edu.pk}\vspace{2mm}\\  
		 {and} \quad  Patrizio Neff\,\thanks{Patrizio Neff,  \ \ Head of Lehrstuhl f\"{u}r Nichtlineare Analysis und Modellierung, Fakult\"{a}t f\"{u}r
		Mathematik, Universit\"{a}t Duisburg-Essen,  Thea-Leymann Str. 9, 45127 Essen, Germany, email: patrizio.neff@uni-due.de} }
\date{\it to appear in Proceedings A
	The Royal Society}
\maketitle
\begin{abstract}
	
We investigate the propagation of Love waves in an isotropic half-space modelled as a linear  {elastic isotropic} Cosserat  material. To this aim, we show that a method commonly used to study Rayleigh wave propagation is also applicable to the analysis of Love wave propagation. This approach is based on the explicit solution of an algebraic Riccati equation, which operates independently of the traditional Stroh formalism. The method provides a straightforward numerical algorithm to determine the wave amplitudes and speed{s}. Beyond its numerical simplicity, the method guarantees the existence and uniqueness of a subsonic wave speed, addressing a problem that remains unresolved in most Cosserat solids generalised  {continua} theories. Although often overlooked, proving the existence of an admissible solution is, in fact, the key point that validates or invalidates the entire analytical approach used to derive the equation determining the wave speed. Interestingly, it is confirmed that the Love waves do not need the artificial introduction of 
a surface layer, as indicated in the literature.

  \medskip
  
  \noindent\textbf{Keywords:} Cosserat elastic materials, matrix analysis method, Riccati equation, Love waves,   existence and uniqueness, numerical simulations.
  
  \medskip
  
  \noindent\textbf{AMS 2020 MSC:} 74J15, 74M25, 74H05, 74J05
  
\end{abstract}

\begin{footnotesize}
	\tableofcontents
\end{footnotesize}

\section{Introduction}
Wave propagation in complex materials has been a subject of strong interest due to its significance in fields such as seismology, material science, and engineering.  {It} is important to study the propagation of surface waves in other media, not only on Earth surface, due to their applications in tomography and in the construction and simulation of various metamaterials which are able to absorb the effect of such waves.

Among the various types of surface waves, the two types of surface waves are named Love waves \cite{Love1911} and Rayleigh waves \cite{Rayleigh}. There is a fundamental difference between these two type of surface waves. Love waves have only a horizontal motion that moves the surface from side to side perpendicular to the direction the wave is traveling, but not up and down, while the Rayleigh waves appear to roll like waves on an ocean,  it moves the ground up and down, and forward and backward in the direction that the wave is moving.  For both these two type of surface waves, it is considered that the boundary of the  half-space modelling the body is free of surface traction. Beside these, it is considered that the non-vanishing displacements and stresses decay to zero with respect to the  {depth}.

Love waves are of particular importance because of their ability to propagate along the surface of elastic media with distinct polarisation and dispersion properties. In seismology, Love waves are seismic surface waves responsible for the horizontal shifting of the Earth's surface during an earthquake and they  are observed in experiments and measurements, but theoretically confirmed in the framework of clasical model of  isotropic and homegeneous elastic materials only when when a low-velocity layer overlies a higher-velocity layer or sub-layers. The motion of particles in a Love wave follows a horizontal line perpendicular to the direction of propagation, making them transverse waves.  The amplitude, or maximum particle motion, tends to decrease rapidly with depth but not  along the Earth's surface. The slower decay rate along the Earth's surface makes surface waves, including Love waves, persist longer over distances compared to body waves, which dissipate faster as they propagate in three dimensions. Large earthquakes can produce Love waves that travel around the Earth multiple times before fading away.
Due to their slow decay, Love waves are the most destructive seismic waves outside the immediate vicinity of an earthquake's focus or epicentre. These waves are often what people feel most strongly during an earthquake.  {However, our} study is not only about the Love waves propagation along the Earth, it is rather a theoretical study which may be applied to an arbitrary medium modelled with the help of  {the} linear Cosserat theory \cite{Cosserat09,Eringen64,Mindlin64}. 

 The study of Love waves  in generalized continuum theories, such as Cosserat materials \cite{Cosserat09,Eringen64,Mindlin64,Eringen99}, presents an intriguing opportunity to extend our understanding of wave behaviour beyond the classical framework of  {linear} elasticity. Cosserat materials, also known as micropolar media, incorporate additional degrees of freedom, including rotational motions and couple stresses, allowing them to capture size effects and micro-structural influences that are absent in traditional continuum models. These distinctive features make Cosserat materials particularly suited for modelling media with internal structures, such as composites, granular materials, and metamaterials. In the framework of the Cosserat theory, the  propagation of Rayleigh waves in an isotropic Cosserat elastic half space  was studied in \cite{ChiritaGhiba3} using  the Stroh formalism \cite{destrade2007seismic,stroh1962steady}.  The strong point of the approach  in \cite{ChiritaGhiba3}  is that  explicit
 expressions of the attenuating coefficients and explicit conditions upon wave
 speed were found, as well as the exact expressions of three linear independent
 amplitude vectors. Then,  a simple form of the secular
 equation is written, which,  by comparison  with other
 generalized forms of the secular equation for Cosserat materials {\cite{Eringen99,Kulesh01,Kulesh03,Kulesh05,Erofeyev,Koebke}}  does not involve the complex form of
 the attenuating coefficients. However, from \cite{ChiritaGhiba3}  it is not obvious how to avoid in  {general} the spurious roots of the secular equation, as long as the Stroh formalism is used.  For Rayleigh waves it is shown in   \cite{khan2022existence} that the new form of the secular equations, written in terms of the impedance matrix, does not admit any spurious root, i.e. there exists only one subsonic surface wave solution of the secular equation.   {Going back to the Love waves, their propagation in isotropic elastic Cosserat materials has been previously  studied in \cite{kundu2017love,kulesh2009problem,Kulesh1,Kulesh2}. 
 Indeed,  \cite{kulesh2009problem,Kulesh1,Kulesh2} seem to be the  first studies in which  a qualitatively new wave mode  is observed  for the Cosserat elastic model (actually they are waves of  Love-type).  
 However,  one important part of the study is yet missing, i.e., the proof that the secular equation really has  {an} admissible solution that ensures that the  {strength of} the Love wave decreases with depth. In our present study this fact is rigorously proved and confirms the remark made in \cite{kulesh2009problem,Kulesh1,Kulesh2}.}

 To this aim we use the method introduced by Fu and Mielke \cite{fu2002new} and Mielke and Fu \cite{mielke2004uniqueness} in classical linear elasticity.
Motivated by  previous work of Mielke and Sprenger \cite{mielke1998quasiconvexity} which is more related to  control theory \cite{knobloch2012topics} than to surface wave propagation, Fu and Mielke \cite{fu2002new} and Mielke and Fu \cite{mielke2004uniqueness} have devised a new method for anisotropic elastic materials which is not based on the Stroh formalism, it is conceptually different from the other methods   and it is mathematically  well explained. They have shown that the {\it impedance matrix} \cite{ingebrigtsen1969elastic} defining the secular equation is the solution of an \textit{algebraic Riccati equation}. Using the properties of this equation, in \cite{mielke2004uniqueness,fu2002new} it is then  {shown} that   the secular equation does not admit spurious roots. It can be  said that the works by Mielke and Fu give an elegant final  answer to the general case of anisotropic materials, in the framework of classical linear elasticity. For the classical anisotropic model, the same form of this equation is present in the paper by Mielke  and Sprenger \cite{mielke1998quasiconvexity}  for $v=0$ and in the papers by Fu and Mielke \cite{mielke2004uniqueness,fu2002new} in the case of general $v$, see also the earlier work by Biryukov \cite{biryukov1985impedance}. 
 In \cite{khan2022existence} we have used  Fu and Mielke's method \cite{fu2002new,mielke2004uniqueness} in the study of the propagation of Rayleigh wave in linear elastic Cosserat  materials.  One of the reviewers' question at that time was if it is not possible to do the  {same} complete study but for the Love wave propagation. Our expectation was that the entire approach presented  by Fu and Mielke \cite{mielke2004uniqueness,fu2002new} should be suitable in the Cosserat theory for Love wave propagation, too. We mention that while the study of Love wave propagation in classical linear isotropic elastic materials is easier than the study of  {Rayleigh} waves since it reduce to a simple ordinary differential equation, at the first look, for Cosserat materials the difficulty of the studies of Love waves propagation and of the Rayleigh wave propagation have at least the same difficulty level. In fact, during our  {investigation} we  {observed} that in some points of our analysis, the Love wave propagation problem comes with some additional difficulties,  {here, determining} the analytical form of the limiting speed. However,  {\it we prove both the existence and the uniqueness} of the wave speed of the Love waves in Cosserat materials. 

 The structure  of the present paper is now  {as follows}. In Section \ref{Sp} we  {present} the system  {of} partial differential  {equations} which describe the behaviour of the material in the linear theory of isotropic and homogeneous Cosserat elastic materials and we give  {necessary} and sufficient conditions for real plane waves propagation. In Section \ref{setLove} we present the setup of the propagation of Love waves. In Section \ref{anzsec} we define and prove the existence of the limiting speed, we obtain the Riccati equation and the secular equation for the  {wave} speed propagation, and we give our main result which  {proves} the existence and uniqueness of the solution of the secular equation in the range fixed by the limiting speed. We mention that this final step is  {rarely} possible in generalised models if other methods are used. 
In Section \ref{Num1} we provide a numerical algorithms which can be implemented for any  material once   the constitutive coefficients are known. We present effective  numerical results for  dense polyurethane  {Foam@0.18mm} with the constitutive parameters given in \cite{Lakes83,lakes1987foam}. We remark that while the propagation speed of the Rayleigh waves  is slower than the propagation speed of the Love waves along the Earth's surface, in the dense polyurethane Foam@0.18mm it is vice versa.  In the last section we give a comparative study with the classical elasticity theory of homogeneous isotropic materials.

\section{Preliminaries}\label{Sp}\setcounter{equation}{0}

We consider that the mechanical behaviour of a body occupying  the unbounded regular region of three dimensional Euclidean space  is modelled with the help of the Cosserat theory of  linear isotropic elastic materials. We denote  by $n$ the outward unit normal on $\partial\Omega$. The body is referred to a fixed system of rectangular Cartesian axes  $Ox_i \,(i=1,2,3)$ , $\{e_1, e_2, e_3\}$ being the unit vectors of these axes. For further notations please see the Appendix \ref{NS}

\subsection{The  {linear isotropic} Cosserat model for elastic solids}\label{subsC}

In the Cosserat model for isotropic materials two vector fields are used to describe the macro- and micro-behaviour of the solid body, i.e.,  
the  {classical} displacement $u:\Omega \subset\mathbb{R}^3\mapsto\mathbb{R}^3$ and the   microrotation vector field $ \vartheta   :\Omega \subset\mathbb{R}^3\mapsto\mathbb{R}^3$, $ \vartheta   ={\rm axl}\, \mathbf{A}$, where { $\mathbf{A}={\rm Anti}\, (\vartheta), \mathbf{A}\in \mathfrak{so}(3)$} represents the microrotation tensor in the  Cosserat theory. In the framework of the linear isotropic hyperelastic theory the Cosserat model is  based on the elastic energy density   \cite{NeffGhibaMicroModel,MadeoNeffGhibaW,MadeoNeffGhibaWZAMM,madeo2016reflection,NeffGhibaMadeoLazar}
 \begin{align}\label{energy}
 	W(\mathrm{D}u ,{\rm Anti}\, \vartheta , \mathrm{D} \vartheta    )=& \underbrace{\mu_{\rm e} \,\lVert \dev_3\, \sym\,\mathbf{e}\rVert ^{2}+\mu_{\rm c}\,\lVert \skw\,\mathbf{e}\rVert ^{2}+\frac{2\,\mu_{\rm e} +3\,\lambda_{\rm e} }{6}\left[\mathrm{tr} \left(\mathbf{e}\right)\right]^{2}}_{:=W_1(\mathbf{e})}\notag\\& +\underbrace{{\mu_{\rm e}L_{\rm c}^2}\,\left[\ {\alpha_1}\,\lVert \dev_3\,\sym\,\mathbf{\mathfrak{K}}\rVert ^{2}+{\alpha_2}\,\lVert \skw\,\mathbf{\mathfrak{K}}\rVert ^{2}+\frac{2\,\alpha_1+3\,\alpha_3}{6}\left[\mathrm{tr} \left(\mathbf{\mathfrak{K}}\right)\right]^{2}\right]}_{:=W_2(\mathbf{\mathfrak{K}})},
 \end{align}
 where  we have used the following definitions for the independent constitutive  variables $\mathbf{e}$  (the non-symmetric strain tensor) and $\mathbf{\mathfrak{K}}$ (the curvature tensor) \begin{align}
 	\mathbf{e}:=\mathrm{D}u -\mathbf{A}=\mathrm{D}u -{\rm Anti}\, (\vartheta)   , \qquad \qquad \mathbf{\mathfrak{K}}:=\mathrm{D} \vartheta.
 \end{align}
 
 Moreover, the stress-strain relations for the homogeneous  isotropic Cosserat elastic solid are
 \begin{align}\label{01}
 	\boldsymbol{\sigma}&:=\frac{\partial\, W}{\partial\, \mathbf{e}}=2\,\mu_{\rm e} \, \sym \,\mathbf{e}+2\,\mu_{\rm c} \, \skw\, \mathbf{e}+\lambda_{\rm e} \,\tr (\mathbf{e})\, \id,\notag\\ 
 	\mathbf{ m}&:=\frac{\partial\, W}{\partial\, \mathbf{\mathfrak{K}}} ={\mu_{\rm e}L_{\rm c}^2}\,\left[2\,{\alpha_1}\, \sym\,\mathbf{\mathfrak{K}} +2\,{\alpha_2}\,\skw\, \mathbf{\mathfrak{K}}+\alpha_3\,\tr(\mathbf{\mathfrak{K}})\,\id\right],
 \end{align}
 where $\boldsymbol{\sigma}$ is the non-symmetric force stress tensor and $\mathbf{m}$ is the second-order non-symmetric couple stress tensor. 
 
The constants $(\mu_{\rm e},\lambda_{\rm e}) $,  $\mu_{\rm c}, L_{\rm c} $  and $( \alpha_1, \alpha_2, \alpha_3)$  are the elastic moduli  representing the parameters related to the meso-scale,  the
 parameters related to the micro-scale the Cosserat couple modulus, the characteristic length, and the three
 general isotropic curvature parameters (weights), respectively.   
  Due to the orthogonal Cartan-decomposition of the Lie-algebra $\mathfrak{gl}(3)\cong\mathbb{R}^{3\times 3}$, the strict positive definiteness of the potential energy is equivalent to the following simple relations for the introduced parameters  \cite{NeffGhibaMicroModel}
 \begin{align}
 	\quad\mu_{\rm e}  >0, \qquad \quad\mu_{\rm c}>0,\qquad \quad  2\,\mu_{\rm e} +3\,\lambda_{\rm e} >0,\qquad \quad \alpha_1>0, \qquad \quad\alpha_2>0, \qquad \quad 2\,\alpha_1+3\,\alpha_3>0 . \label{posCoss}
 \end{align}
 However, our entire subsequent analysis will be made under weaker conditions on the constitutive parameters,  { see \eqref{d12}}.

In the absence of  external  body forces and of   external  body moment, the Euler-Lagrange equations constructed  {based} on this internal energy are given as the following   the PDE-system of the model  \cite{Eringen99}
\begin{align}\label{eqisaxl}
	\rho\,u_{,tt}&=\Div[2\,\mu_{\rm e} \, \sym\,\mathrm{D}u +2\,\mu_{\rm c} \,\skw(\mathrm{D}u -\mathbf{A})+ \lambda_{\rm e}\, \tr(\mathrm{D}u ){\cdot} \id]\, ,\notag\\
	2\,\rho\, {j\,\mu_{\rm e}\,}\,\tau_{\rm c}^2\,\,\,(\axl\, \,\mathbf{A})_{,tt}&={\mu_{\rm e}L_{\rm c}^2}\,\bigg[{\alpha_1 }\,\dev\,\sym (\mathrm{D}\, \axl  \,\mathbf{A})+{\alpha_2}\,\skw (\mathrm{D}\, \axl  \,\mathbf{A})+ \alpha_3\,\tr(\mathrm{D}\, \axl  \,\mathbf{A}){\cdot } \id\bigg]
	\\&\ \ \ \ \ \ -4\,\mu_{\rm c} \,\axl\,(\skw\,\mathrm{D}u -\mathbf{A})\,  \ \ \ \text {in}\ \ \  \Omega\times [0,T],\notag
\end{align}
 which rewritten in indices read
\begin{align}\label{PDE}
	\rho\,\frac{\partial^2 \,u_i}{\partial\, t^2}&=\underbrace{(\mu_{\rm e} +\mu_{\rm c})\dd\sum_{l=1}^3\frac{\partial^2 u_i }{\partial x_l^2}+(\mu_{\rm e} -\mu_{\rm c}+\lambda_{\rm e} )\dd\sum_{l=1}^3\frac{\partial^2 u_l }{\partial x_l\partial x_i}+2\,\mu_{\rm c} \,\sum_{l,s=1}^3\varepsilon _{ils}\frac{\partial \vartheta _s }{\partial x_l}}_{={\rm Div}\,\boldsymbol{\sigma}},
	\vspace{1.2mm}\\
	\rho\, j\,\mu_{\rm e}\,\tau_{\rm c}^2\,\frac{\partial^2 \,\vartheta_i}{\partial\, t^2}&=\underbrace{{\mu_{\rm e}L_{\rm c}^2}\,\left[(\alpha_1+\alpha_2)\dd\sum_{l=1}^3\frac{\partial^2 \vartheta _i }{\partial x_l^2}+(\alpha_1-\alpha_2+\alpha_3)\dd\sum_{l=1}^3\frac{\partial^2 \vartheta_l }{\partial x_l\partial x_i}\right]}_{=\,{\rm Div}\,\mathbf{ m}}+2\,\mu_{\rm c} \,\sum_{l,s=1}^3\varepsilon _{isl}\frac{\partial u_l }{\partial x_s}-4\, \mu_{\rm c}\, \vartheta _i, \notag
\end{align}
with $i=1,2,3$, where  {$\rho$ is the density,  $j$ is an inertia weight parameter,  $\tau_{\rm c}$ the internal characteristic time (then the combination $j\,\mu_{\rm e}\,\tau_{\rm c}^2$ describes the mass density of inertia moment) \cite[page 163]{Eringen99}}  and $\varepsilon _{ils}$ is the Levi-Civita symbol.
	These equations are in complete agreement to the equations proposed in the Cosserat theory \cite{Eringen99}, but written in indices, see \cite{ghiba2023cosserat} for a complete comparison.

In the following we assume $\rho>0$ and $j>0$ without mentioning these conditions in the hypothesis of our results.
\subsection{Real plane waves in the direction $\xi = (\xi_1,\xi_2,0)^T$ }\label{Rpw}

{Even if  we will  consider the  propagation of  surface waves with the direction $e_1 =
	(1,0,0)^T$, i.e. some horizontal direction, $x_2$ being the vertical direction orthogonal
	to the surface along which the wave decays, the method we use in this paper needs  to consider a general direction of wave propagation $\xi = (\xi_1,\xi_2,0)^T$ and to characterise  where the wave is
	a real bulk wave. Therefore, for  $\xi=(\xi_1,\xi_2,0)^T$ with $\lVert{\xi}\rVert^2=1$}, it is useful to {know} for which conditions on the constitutive parameters we have that for every  wave number $k>0$  the system of partial differential equations \eqref{PDE} admits a non trivial solution in the form
\begin{align}\label{rwr}
	u(x_1,x_2,t)&=\begin{footnotesize}\begin{pmatrix}0\\0\\\widehat{u}_3\end{pmatrix}\end{footnotesize}
	\, e^{{\rm i}\, \left(k\langle {\xi},\, x\rangle_{\mathbb{R}^3}-\,\omega \,t\right)}\,,\qquad\qquad  \vartheta (x_1,x_2,t)=\begin{footnotesize}{\rm i}\,\begin{pmatrix}\widehat{\vartheta }_1\\\widehat{\vartheta }_2\\0\end{pmatrix}\end{footnotesize} \, e^{{\rm i}\, \left(k\langle {\xi},\, x\rangle_{\mathbb{R}^3}-\,\omega \,t\right)},\\&
	(\widehat{u}_3,\widehat{\vartheta }_1, \widehat{ \vartheta_2   })^T\in\mathbb{C}^{3}, \quad (\widehat{u}_3,\widehat{\vartheta }_1, \widehat{ \vartheta_2   })^T\neq 0\,\notag
\end{align}
only for real positive values $\omega^2$, where ${\rm i}\,=\sqrt{-1}$ is the complex unit.  

However, since our formulation is isotropic, by  demanding real plane waves in any direction $\xi=(\xi_1,\xi_2,0)$, $\lVert \xi\rVert=1$, it is equivalent  {to demanding} real plane waves in the direction $e_1=(1,0,0)$. Inserting  \eqref{rwr} into \eqref{PDE} we see that   $\widehat{u}_3, \widehat{\vartheta}_1$ and $\widehat{\vartheta}_2$ have to satisfy the system of linear  {algebraic} equations
\begin{align}\label{algPDE2}
-\omega^2\rho\, \widehat{u}_3&=-k^2\,(\mu_{\rm e} +\mu_{\rm c})\dd \widehat{u}_3 -2\,k\,\mu_{\rm c}\,\widehat{\vartheta }_2 ,\notag
\vspace{1.2mm}\\
-{\rm i}\,\omega^2\rho\, j\,\mu_{\rm e}\,\tau_{\rm c}^2\,\widehat{ \vartheta }_1&=-{\rm i}\,k^2\,{\mu_{\rm e}\,L_{\rm c}^2}\,(\alpha_1+\alpha_2)\,\widehat{ \vartheta }_1-{\rm i}\,k^2\,{\mu_{\rm e}\,L_{\rm c}^2}\,(\alpha_1-\alpha_2+\alpha_3)\widehat{\vartheta }_1 -4\,{\rm i}\, \mu_{\rm c}\, \widehat{\vartheta }_1,\vspace{1.2mm}\\
-{\rm i}\,\omega^2\rho\, j\,\mu_{\rm e}\,\tau_{\rm c}^2\,\widehat{ \vartheta }_2&=-{\rm i}\,k^2\,{\mu_{\rm e}\,L_{\rm c}^2}\,(\alpha_1+\alpha_2)\,\widehat{ \vartheta }_2-2\,{\rm i}\,k\,\mu_{\rm c}\,\widehat{u}_3 -4\,{\rm i}\, \mu_{\rm c}\, \widehat{\vartheta }_2. \notag
\end{align}

In \cite{khan2022existence} we find that  for all $k>0$  the system \eqref{PDE}$_2$
admits non trivial solutions $w=\begin{footnotesize}\begin{pmatrix}
\widehat{u}_3,\widehat{ \vartheta }_1,\widehat{ \vartheta }_2
\end{pmatrix}\end{footnotesize}^T$  only for  real positive values $\omega^2$  if and only if the following eigenvalue problem admits only real solutions
\begin{align}
&\left[\underbrace{\begin{footnotesize}\begin{pmatrix}
	k^2\frac{ \mu_{\rm e} +\mu_{\rm c} }{\rho}& 0& -2\, k\, \frac{\mu_{\rm c}}{\rho\, \sqrt{j\,\mu_{\rm e}\,\tau_{\rm c}^2\,}}\vspace{2mm}\\
	0& \ k^2\, {\mu_{\rm e}\,L_{\rm c}^2}\,\frac{2\,\alpha_1 +\alpha_3}{\rho\, j\,\mu_{\rm e}\,\tau_{\rm c}^2\,}+4\, \frac{\mu_{\rm c}}{\rho \, j\,\mu_{\rm e}\,\tau_{\rm c}^2}& 0\vspace{2mm}\\
	-2\, k\, \frac{\mu_{\rm c}}{\rho\, \sqrt{j\,\mu_{\rm e}\,\tau_{\rm c}^2\,}}& 0&\ \ k^2\, {\mu_{\rm e}\,L_{\rm c}^2}\,\frac{\alpha_1 +\alpha_2}{\rho\, j\,\mu_{\rm e}\,\tau_{\rm c}^2\,}+4\, \frac{\mu_{\rm c}}{\rho \, j\,\mu_{\rm e}\,\tau_{\rm c}^2\,}
	\end{pmatrix}\end{footnotesize}}_{:=\,\widetilde{\mathbf{Q}}_2(e_1,k)}-\omega^2\id\right]\, h=0, 
\end{align}
with $h=\widehat{\id}^{1/2}\begin{footnotesize}\begin{pmatrix}
\widehat{u}_3\\\widehat{ \vartheta }_1\\\widehat{ \vartheta }_2
\end{pmatrix}\end{footnotesize}$ and	$\widehat{\id} =\begin{footnotesize}\begin{pmatrix} \rho&0	&0
\\
0&\rho\,j\,\mu_{\rm e}\,\tau_{\rm c}^2
&0
\\0
& 0 & \rho\,j\,\mu_{\rm e}\,\tau_{\rm c}^2\end{pmatrix}\end{footnotesize}$, i.e., if and only if 
\begin{align}\label{rw3}
\mu_{\rm e}>0, \qquad\qquad  \mu_{\rm c}>0,  \qquad\qquad 2\, \alpha _1+\alpha _3 >0, \qquad\qquad (\alpha _1+\alpha _2) >0.
\end{align}

Then, for $k>0$ and due to the assumed isotropy, extrapolating to all directions of propagation, we have
\begin{proposition}\label{proprealw2} {\rm \cite{khan2022existence}} Let   ${\xi}\in \mathbb{R}^3$, $\xi\neq 0$ be any direction of the form $\xi=(\xi_1,\xi_2, 0)^T$. The necessary and sufficient conditions for existence of   a non trivial solution of the system of partial differential equations \eqref{PDE}  of the form given by \eqref{rwr} are \footnote{  {These conditions have physical interpretations, i.e., \(\mu_{\rm e} > 0\) is the stability condition (for the existence of transverse waves at low frequencies that propagate with velocity \(\sqrt{\frac{\mu_{\rm e}}{\rho}}\), \(2\alpha_1 + \alpha_3 > 0\) is the condition for the existence of longitudinal rotational wave at high frequencies,
		 \(\alpha_1 + \alpha_2 > 0\) is the condition for the existence of the acoustic branch of the transversal waves at high frequencies, while $\mu_{\rm c}>0$ means that the Cosserat and classical elastic effects are not uncoupled.}}
	\begin{align}\label{d12}
	\mu_{\rm e} >0,\qquad \qquad \mu_{\rm c} >0,\qquad \qquad   \alpha_1+\alpha_2>0 \qquad \qquad 2\alpha_1+\alpha_3 >0.
	\end{align}
\end{proposition}
The  restrictions \eqref{d12}  will be imposed for the rest of the paper.  Note that these conditions do not involve the constitutive parameter $\lambda_{\rm e}$ and do not imply the existence of real waves in any directions.

\section{The setup for the propagation of Love waves}\label{setLove}\setcounter{equation}{0}

 In the framework of the Love wave, we consider the region $\Omega$ to be the half space $$\Sigma:=\{(x_1,x_2,x_3)\,|\,x_1,x_3\in \mathbb{R},\,\, x_2\geq0\}.$$ The boundary of the homogeneous and isotropic half-space  is free of surface traction, i.e.,
 \begin{align}\label{03}
 \boldsymbol{\sigma}.\, n=0,\qquad \qquad \mathbf{m}.\, n=0 \qquad \qquad\textrm{for}\quad x_2=0.
 \end{align}
 
 \begin{figure}[h!]
 	\centering 
 		\includegraphics[angle=0,width=0.75\linewidth]{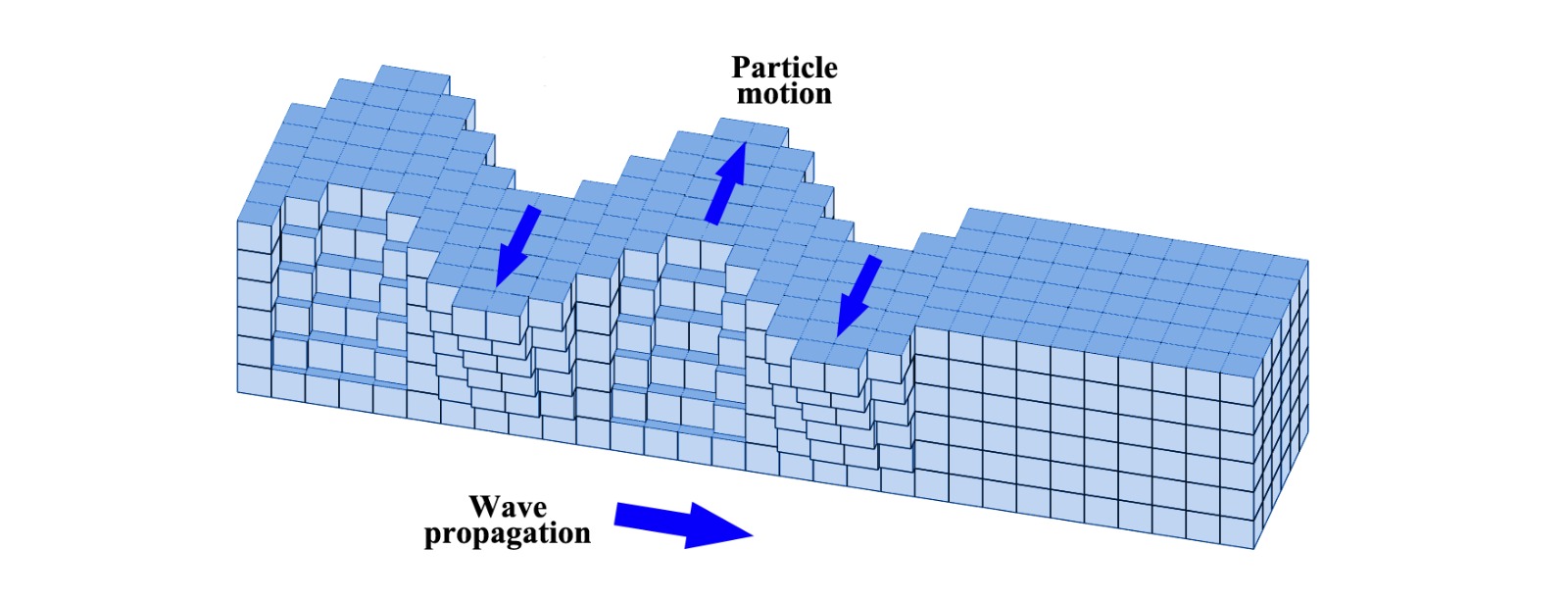}\vspace*{-0.5cm}
 		\caption{Love waves propagation.}\label{fig-love}
 \end{figure}

 In addition, the solution has to satisfy the following   decay condition
 \begin{align}\label{04}
 \lim_{x_2 \rightarrow \infty}\{u_3,{\vartheta_1}, \vartheta_2,\sigma_{13},\sigma_{23},m_{33}\} (x_1,x_2,t)=0 \qquad \quad \forall\, x_1\in\mathbb{R}, \quad \forall \, t\in[0,\infty).
 \end{align}

In isotropic solids and in the context of Love wave propagation, the surface particles move in the planes normal to the surface $x_2=0$ and  {orthogonal} to the direction of propagation  $e_1=(1,0,0)^T$. Accordingly to these characteristics of the Love waves, we consider the following   {plane} strain ansatz as a first step in our process of construction of the solution
 \begin{align}
 u(x_1,x_2,t)&=\begin{footnotesize}\begin{pmatrix} 0
 	\\
 	0
 	\\u_3(x_1,x_2,t)
 	\end{pmatrix}\end{footnotesize},\qquad \qquad \qquad\qquad \qquad{\rm D}u(x_1,x_2,t)=
 	\begin{footnotesize}\begin{pmatrix}
 	0 & 0 & \frac{\partial\, u_{3}}{\partial\,x_1}(x_1,x_2,t)\\
 	0 & 0 & \frac{\partial\, u_{3}}{\partial\,x_2}(x_1,x_2,t)\\
 	0 & 0 & 0
 	\end{pmatrix}\end{footnotesize}, \\ 
 	\mathbf{A}(x_1,x_2,t)&=
 	\begin{footnotesize}\begin{pmatrix}
 	0&0&\vartheta_2(x_1,x_2,t)\\
 	0&0&-\vartheta_1(x_1,x_2,t)\\
 	-\vartheta_2(x_1,x_2,t)&\vartheta_1(x_1,x_2,t)&0
 	\end{pmatrix}\end{footnotesize},\qquad \quad
 	\vartheta (x_1,x_2,x_3,t)=\axl \,\mathbf{A}=\begin{footnotesize}\begin{pmatrix} \vartheta   _1(x_1,x_2,t)
 	\\
 	\vartheta   _2(x_1,x_2,t)
 	\\ 0
 		\end{pmatrix}\end{footnotesize}.\notag
 \end{align}
 
Corresponding to our ansatz, we deduce 
the following form of the stress tensor 
 \begin{align}\label{stress_tens}
 \boldsymbol{\sigma}=
\begin{footnotesize}\begin{footnotesize}\begin{pmatrix}
0 & 0 & (\mu_{\rm e}+\mu_{\rm c})\,\frac{\partial\, u_{3}}{\partial\,x_1}-2\,\mu_{\rm c}\,\vartheta _2
\vspace{2mm} \\
0 & 0 & (\mu_{\rm e}+\mu_{\rm c})\,\frac{\partial\, u_{3}}{\partial\,x_2}+2\,\mu_{\rm c}\,\vartheta _1
\vspace{2mm} \\
 (\mu_{\rm e}-\mu_{\rm c})\,\frac{\partial\, u_{3}}{\partial\,x_1}+2\,\mu_{\rm c}\,\vartheta _2
 &
  (\mu_{\rm e}-\mu_{\rm c})\,\frac{\partial\, u_{3}}{\partial\,x_2}-2\,\mu_{\rm c}\,\vartheta _1
 &	
0
 \end{pmatrix}\end{footnotesize}\end{footnotesize},
 \end{align}
 and of the couple stress tensor 
 \begin{align}\label{couple_stress_tens}
\mathbf{m}= {\mu_{\rm e}\,L_{\rm c}^2}\,
\begin{footnotesize}\begin{footnotesize}\begin{pmatrix}
({2\alpha_1+\alpha_3})\,\frac{\partial \,\vartheta _{1}}{\partial \,x_1}+\alpha_3\,\frac{\partial \,\vartheta _{2}}{\partial \,x_2}
 &
({\alpha_1-\alpha_2})\,\frac{\partial \,\vartheta _{1}}{\partial \,x_2}+({\alpha_1+\alpha_2})\,\frac{\partial \,\vartheta _{2}}{\partial \,x_1}
 &
0
\vspace{2mm}\\
({\alpha_1+\alpha_2})\,\frac{\partial \,\vartheta _{1}}{\partial \,x_2}+({\alpha_1-\alpha_2})\,\frac{\partial \,\vartheta _{2}}{\partial \,x_1}
 &
 \alpha_3\,\frac{\partial \,\vartheta _{1}}{\partial \,x_1}+({2\alpha_1+\alpha_3})\,\frac{\partial \,\vartheta _{2}}{\partial \,x_2}
 &
0
 \vspace{2mm}\\
 0
 &
0
 &	
 \alpha_3\,\frac{\partial \,\vartheta _{1}}{\partial \,x_1}+\alpha_3\,\frac{\partial \,\vartheta _{2}}{\partial \,x_2}
 \end{pmatrix}\end{footnotesize}\end{footnotesize},
 \end{align}
 while the equation of motion  {is} reduced to 
 \begin{align}\label{49}
 \rho\,\frac{\partial^2 \, {u}_{3}}{\partial\,t^2}&=\frac{\partial \,\sigma_{13}}{\partial \,x_1}+\frac{\partial \,\sigma_{23}}{\partial \,x_2},\notag\\
 \rho\,j\,\mu_{\rm e}\,\tau_{\rm c}^2\, \,\frac{\partial^2 \,{\vartheta _{1}}}{\partial\,t^2}&= \frac{\partial \,m_{11}}{\partial \,x_1}+ \frac{\partial \,m_{21}}{\partial \,x_2}-\sigma_{23}+\sigma_{32},\\
\rho\,j\,\mu_{\rm e}\,\tau_{\rm c}^2\, \,\frac{\partial^2 \,{\vartheta _{2}}}{\partial\,t^2}&= \frac{\partial \,m_{12}}{\partial \,x_1}+ \frac{\partial \,m_{22}}{\partial \,x_2}+\sigma_{13}-\sigma_{31},\notag
 \end{align}
 subjected to the aforementioned boundary conditions  \eqref{03}, which turn out to be
 \begin{align}\label{50}
 \sigma_{23}=0,\qquad m_{21}=m_{22}=0 \qquad \text{at}\qquad x_2=0.
 \end{align}

 Therefore, our further aim  is to give an explicit solution  $(u_3, \vartheta_1,\vartheta_2   )$ of the following  system
 \begin{align}\label{x6}
 \rho\, \frac{\partial^2{u}_{3}}{ \partial \,t^2}&=(\,\mu_{\rm e}+\mu_{\rm c}) \,\left(\frac{\partial^2 \,u_{3}}{\partial \,x_1^2}+\frac{\partial^2 \,u_{3}}{\partial \,x_2^2}\right)-2\,\mu_{\rm c} \, \frac{\partial \,\vartheta _{2}}{\partial \,x_1}+2\,\mu_{\rm c} \, \frac{\partial \,\vartheta _{1}}{\partial \,x_2},\\
\rho \,j\,\mu_{\rm e}\,\tau_{\rm c}^2\,\frac{\partial^2 \,{{\vartheta }_{1}}}{\partial \,t^2}&={\mu_{\rm e}\,L_{\rm c}^2}\left[\,(2\alpha_1+\alpha_3) \,\frac{\partial^2 \,\vartheta _{1}}{\partial \,x_1^2}+(\alpha_1+\alpha_2) \,\frac{\partial^2 \,\vartheta _{1}}{\partial \,x_2^2}+(\alpha_1-\alpha_2+\alpha_3) \,\frac{\partial^2 \,\vartheta _{2}}{\partial \,x_1\,\partial \,x_2}\right]-2\,\mu_{\rm c} \,  \frac{\partial \,u_{3}}{\partial \,x_2}-4\,\mu_{\rm c} \, \vartheta _1,\notag\\
\rho \,j\,\mu_{\rm e}\,\tau_{\rm c}^2\,\frac{\partial^2 \,{{\vartheta }_{2}}}{\partial \,t^2}&={\mu_{\rm e}\,L_{\rm c}^2}\left[\,(\alpha_1+\alpha_2) \,\frac{\partial^2 \,\vartheta _{2}}{\partial \,x_1^2}+(2\alpha_1+\alpha_3) \,\frac{\partial^2 \,\vartheta _{2}}{\partial \,x_2^2}+(\alpha_1-\alpha_2+\alpha_3) \,\frac{\partial^2 \,\vartheta _{1}}{\partial \,x_1\,\partial \,x_2}\right]+2\,\mu_{\rm c} \,  \frac{\partial \,u_{3}}{\partial \,x_1}-4\,\mu_{\rm c} \, \vartheta _2,\notag
 \end{align}
 which satisfies the boundary conditions at $x_2=0$
 \begin{align}\label{x17}
  (\mu_{\rm e}+\mu_{\rm c})\,\frac{\partial\, u_{3}}{\partial\,x_2}+2\,\mu_{\rm c}\,\vartheta _1=&\,0,\notag\\
 {\mu_{\rm e}\,L_{\rm c}^2}\,({\alpha_1+\alpha_2})\,\frac{\partial \,\vartheta _{1}}{\partial \,x_2}+{\mu_{\rm e}\,L_{\rm c}^2}\,({\alpha_1-\alpha_2})\,\frac{\partial \,\vartheta _{2}}{\partial \,x_1}=&\,0,\\
 {\mu_{\rm e}\,L_{\rm c}^2}\,\alpha_3\,\frac{\partial \,\vartheta _{1}}{\partial \,x_1}+{\mu_{\rm e}\,L_{\rm c}^2}\,({2\alpha_1+\alpha_3})\,\frac{\partial \,\vartheta _{2}}{\partial \,x_2}=&\,0,\notag
 \end{align}
 and which has the asymptotic behaviour \eqref{04}.

 \section{The ansatz for the solution and the limiting speed}\label{anzsec}\setcounter{equation}{0}
 We look for a solution of \eqref{x6} and \eqref{x17} having the form\footnote{We take ${\rm i}\,z_3$ since this choice leads us, in the end, only to real matrices.}
 \begin{align}\label{x5}
 \mathcal{U}(x_1,x_2,t)=\begin{footnotesize}\begin{pmatrix}u_3(x_1,x_2,t)
 	\\
 	\vartheta_1(x_1,x_2,t)
 	\\\vartheta _2(x_1,x_2,t)
 	\end{pmatrix}\end{footnotesize}={\rm Re}\left[\begin{footnotesize}\begin{pmatrix} {\rm i}\,z_3(x_2)
 	\\
 	z_1(x_2)
 	\\z_2(x_2)
 \end{pmatrix}\end{footnotesize} e^{ {\rm i}\, k\, (  x_1-vt)}\right],
 \end{align}
 where  $v$ is the propagation speed {(the phase  velocity)}. If $z_i$, $i=1,2,3$, are solutions of the following systems
 \begin{align}
\begin{footnotesize}\begin{pmatrix} \mu_{\rm e} +\mu_{\rm c} &0	&0
 	\\
 	0& {\mu_{\rm e}\,L_{\rm c}^2}\,(\alpha_1+\alpha_2) 
 	&0
 	\\0
 	& 0 & {\mu_{\rm e}\,L_{\rm c}^2}\,(2\alpha_1+\alpha_3) \end{pmatrix}\end{footnotesize}\begin{footnotesize}\begin{pmatrix}	z_3''(x_2)
 	\\
 	z_1''(x_2)
 	\\\	z_2''(x_2)
 	\end{pmatrix}\end{footnotesize}+ {\rm i}\, \begin{footnotesize}\begin{pmatrix} 0& -2\mu_{\rm c} & 0  
 	\\
 	-2\mu_{\rm c} & 0 
 	& {\mu_{\rm e}\,L_{\rm c}^2}\,k\,(\alpha_1-\alpha_2+\alpha_3)
 	\\ 0  
 	& {\mu_{\rm e}\,L_{\rm c}^2}\,k\,(\alpha_1-\alpha_2+\alpha_3) & 0 \end{pmatrix}\end{footnotesize}\begin{footnotesize}\begin{pmatrix} 	z_3'(x_2)
 	\\
 	z_1'(x_2)
 	\\\	z_2'(x_2)
 	\end{pmatrix}\end{footnotesize}\notag\vspace{2mm}\\ -\begin{footnotesize}\begin{pmatrix} k^2\,(\mu_{\rm e} +\mu_{\rm c} )-\rho \,k^2v^2 & 0 & 2\mu_{\rm c}\,k
 	\\
 	0 & {\mu_{\rm e}\,L_{\rm c}^2}\,k^2\,(2\alpha_1+\alpha_3)-\rho j\,\mu_{\rm e}\,\tau_{\rm c}^2\, k^2v^2+4 \mu_{\rm c}
 	& 0
 	\\2\mu_{\rm c}\,k
 	& 0 & {\mu_{\rm e}\,L_{\rm c}^2}\,k^2\,(\alpha_1+\alpha_2)-\rho j\,\mu_{\rm e}\,\tau_{\rm c}^2\, k^2v^2+4 \mu_{\rm c} \end{pmatrix}\end{footnotesize}\begin{footnotesize}\begin{pmatrix} 	z_3(x_2)
 	\\
 	z_1(x_2)
 	\\	z_2(x_2)
 	\end{pmatrix}\end{footnotesize}=0,
 \end{align}
 and (from the boundary conditions)
 \begin{align}
 \begin{footnotesize}\begin{pmatrix} \mu_{\rm e} +\mu_{\rm c} &0	&0
 	\\
 	0& {\mu_{\rm e}\,L_{\rm c}^2}\,(\alpha_1+\alpha_2)   
 	&0
 	\\0
 	& 0 & {\mu_{\rm e}\,L_{\rm c}^2}\,(2\alpha_1+\alpha_3) \end{pmatrix}\end{footnotesize}\begin{footnotesize}\begin{pmatrix} 	z_3'(0)
 	\\
 	z_1'(0)
 	\\	z_2'(0)
 	\end{pmatrix}\end{footnotesize}+ {\rm i}\, \begin{footnotesize}\begin{pmatrix} 0& -2\,\mu_{\rm c} & 0  
 	\\
 	0 & 0 
 	& k{\mu_{\rm e}\,L_{\rm c}^2}\,(\alpha_1-\alpha_2)
 	\\0
 	& k{\mu_{\rm e}\,L_{\rm c}^2}\,\alpha_3 & 0  \end{pmatrix}\end{footnotesize}\begin{footnotesize}\begin{pmatrix}	z_3(0)
 	\\
 	z_1(0)
 	\\	z_2(0)
 \end{pmatrix}\end{footnotesize}=0,\notag
 \end{align}\normalsize
 where $\cdot '$ denotes the derivative with respect to $x_2$, then $\mathcal{U}$ given by the ansatz \eqref{x5} satisfies \eqref{x6} and \eqref{x17}.
In a more compact notation, the above equations admit the following equivalent form \begin{align}\label{11}
\frac{1}{k^2}\,{\mathbf{X}}\,z''(x_2)+ {\rm i}\,\frac{1}{k}\, ({\mathbf{Y}}+{\mathbf{Y}}^T)\,z'(x_2)-{\mathbf{Z}}\,z(x_2)+k^2\, v^2 \, \hat\id\,z(x_2)=&\,0,\\
\frac{1}{k^2}\,{\mathbf{X}}\,z'(0)+ {\rm i}\, \frac{1}{k}\,{\mathbf{Y}}^T\,z(0)=&\,0,\notag
\end{align} where
     the matrices  ${\mathbf{X}}\,,{\mathbf{Y}}\, , {\mathbf{Z}}$ and $\hat\id$  are defined by
 \begin{align}\label{x47}
 {\mathbf{X}}&=k^2\,\begin{footnotesize}\begin{pmatrix}  \mu_{\rm e} +\mu_{\rm c} &0	&0
 	\\
 	0& \mu_{\rm e}\,L_{\rm c}^2\,(\alpha_1+\alpha_2) 
 	&0
 	\\0
 	& 0 & {\mu_{\rm e}\,L_{\rm c}^2}\,(2\alpha_1+\alpha_3) \end{pmatrix}\end{footnotesize},\qquad\quad  {\mathbf{Y}}=k\,\begin{footnotesize}\begin{pmatrix}  0& 0 & 0
 	\\
 	-2\mu_{\rm c} & 0 
 	& k{\mu_{\rm e}\,L_{\rm c}^2}\,\alpha_3
 	\\0  
 	& k{\mu_{\rm e}\,L_{\rm c}^2}\,(\alpha_1-\alpha_2) & 0 \end{pmatrix}\end{footnotesize},\\
 {\mathbf{Z}}&=\begin{footnotesize}\begin{pmatrix} k^2\,(\mu_{\rm e} +\mu_{\rm c} ) & 0	& 2k\,\mu_{\rm c}
 	\\
 	0 & {\mu_{\rm e}\,L_{\rm c}^2}\,k^2\,(2\alpha_1+\alpha_3)+4 \mu_{\rm c}
 	& 0
 	\\2k\,\mu_{\rm c}
 	& 0 & {\mu_{\rm e}\,L_{\rm c}^2}\,k^2\,(\alpha_1+\alpha_2)+4 \mu_{\rm c} \, \end{pmatrix}\end{footnotesize}, \qquad  {
 \widehat{\id}=\begin{footnotesize}\begin{pmatrix} 
 	\varrho & 0 & 0 \\
 		0 & \rho j\,\mu_{\rm e}\,\tau_{\rm c}^2 & 0 \\
 		0 & 0 & \rho j\,\mu_{\rm e}\,\tau_{\rm c}^2\end{pmatrix}\end{footnotesize}}.\notag
 \end{align}
 The system \eqref{11} has a similar structure to that from classical linear elasticity \cite{fu2002new} but we still have to rewrite it in order to make it  manageable for our analysis.  In this respect, the  solution $z$ of \eqref{11} is equivalent to find a  solution $y$ of 
 \begin{align}\label{11t}
 \frac{1}{k^2}\,\widehat{\id}^{-1/2}\,{\mathbf{X}}\,\widehat{\id}^{-1/2}\,y''(x_2)+ {\rm i}\,\frac{1}{k}\, \widehat{\id}^{-1/2}\,({\mathbf{Y}}+{\mathbf{Y}}^T)\widehat{\id}^{-1/2}\,y'(x_2) -\widehat{\id}^{-1/2}\,{\mathbf{Z}}\,\widehat{\id}^{-1/2}\,y(x_2)+k^2\, v^2 \, \id\,y(x_2)&=\,0,\\
  \frac{1}{k^2}\,\widehat{\id}^{-1/2}\,{\mathbf{X}}\,\widehat{\id}^{-1/2}\,y'(0)+ {\rm i}\, \frac{1}{k}\,\widehat{\id}^{-1/2}\,{\mathbf{Y}}^T\,\widehat{\id}^{-1/2}\,y(0)&=\,0,\notag
 \end{align}
 where  $y(x_2):=\widehat{\id}^{1/2}\,z(x_2)$. Therefore we  {may} use the modified matrices
 \begin{align}\label{nTQ}
 \boldsymbol{\mathcal{X}}:=&\,k^2\,\begin{footnotesize}\begin{pmatrix}  \frac{\mu_{\rm e} +\mu_{\rm c}}{\rho} &0	&0
 	\\
 	0& \frac{L_{\rm c}^2\,(\alpha_1+\alpha_2)}{\rho j\,\tau_{\rm c}^2} 
 	&0
 	\\0
 	& 0 & \frac{{L_{\rm c}^2}\,(2\alpha_1+\alpha_3)}{\rho j\,\tau_{\rm c}^2} \end{pmatrix}\end{footnotesize}, \qquad \qquad 
\boldsymbol{\mathcal{Y}}:= k\,\begin{footnotesize}\begin{pmatrix}  0& 0 & 0
 	\\
 	\frac{-2\mu_{\rm c}}{{\rho\,\tau_{\rm c}\sqrt{j\,\mu_{\rm e}}}} & 0 
 	& \frac{k{\,L_{\rm c}^2}\,\alpha_3}{\rho j\,\tau_{\rm c}^2}
 	\\0  
 	& \frac{k{\,L_{\rm c}^2}\,(\alpha_1-\alpha_2)}{\rho j\,\tau_{\rm c}^2} & 0 \end{pmatrix}\end{footnotesize},\notag\\
\boldsymbol{\mathcal{Z}}:=&\begin{footnotesize}\begin{pmatrix} \frac{k^2\,(\mu_{\rm e} +\mu_{\rm c} )}{\rho} & 0	& \frac{2k\,\mu_{\rm c}}{{\rho\,\tau_{\rm c}\sqrt{j\,\mu_{\rm e}}}}
 	\\
 	0 & \frac{{\mu_{\rm e}\,L_{\rm c}^2}\,k^2\,(2\alpha_1+\alpha_3)+4 \mu_{\rm c}}{\rho j\,\mu_{\rm e}\,\tau_{\rm c}^2}
 	& 0
 	\\\frac{2k\,\mu_{\rm c}}{{\rho\,\tau_{\rm c}\sqrt{j\,\mu_{\rm e}}}}
 	& 0 & \frac{{\mu_{\rm e}\,L_{\rm c}^2}\,k^2\,(\alpha_1+\alpha_2)+4 \mu_{\rm c}}{\rho j\,\mu_{\rm e}\,\tau_{\rm c}^2} \, \end{pmatrix}\end{footnotesize},
 \end{align}
 and the following equivalent form of the system \eqref{11} emerges
 \begin{align}\label{n11}
 \frac{1}{k^2}\boldsymbol{\mathcal{X}}\,y''(x_2)+ {\rm i}\,\frac{1}{k} (\boldsymbol{\mathcal{Y}}+\boldsymbol{\mathcal{Y}}^T)\,y'(x_2)-\boldsymbol{\mathcal{Z}}\,y(x_2)+k^2\, v^2 \, \id\,y(x_2)=&\,0,\\
 \frac{1}{k^2}\boldsymbol{\mathcal{X}}\,y'(0)+ {\rm i}\,\frac{1}{k} \,\boldsymbol{\mathcal{Y}}^T\,y(0)=&\,0.\notag
 \end{align}

 \begin{lemma}If  the constitutive coefficients satisfy the conditions 	\eqref{d12},
 then the	 matrices $\boldsymbol{\mathcal{Z}}$ and $\boldsymbol{\mathcal{X}}$ are symmetric and positive definite.	
 \end{lemma}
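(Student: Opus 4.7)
The plan is to verify both assertions by direct inspection of the matrices displayed in \eqref{nTQ}, exploiting their block structure. Symmetry is immediate in both cases: $\boldsymbol{\mathcal{X}}$ is diagonal, and $\boldsymbol{\mathcal{Z}}$ has the pattern
\[
\boldsymbol{\mathcal{Z}}=\begin{footnotesize}\begin{pmatrix} a & 0 & c \\ 0 & b & 0 \\ c & 0 & d \end{pmatrix}\end{footnotesize},
\]
so symmetry follows by reading off the entries.

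For $\boldsymbol{\mathcal{X}}$, since it is diagonal, positive definiteness is equivalent to the positivity of its three diagonal entries. Using $\rho,\,j,\,\tau_{\rm c}^2,\,L_{\rm c}^2,\,k^2>0$, the assumptions $\mu_{\rm e}>0,\ \mu_{\rm c}>0,\ \alpha_1+\alpha_2>0,\ 2\alpha_1+\alpha_3>0$ from \eqref{d12} yield $\mu_{\rm e}+\mu_{\rm c}>0$ and the positivity of the other two entries at once.

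For $\boldsymbol{\mathcal{Z}}$, the block structure above means that after a permutation of rows and columns (swap index $2$ with one of the others) the matrix becomes block-diagonal with a scalar block equal to $b=\frac{\mu_{\rm e}L_{\rm c}^2 k^2(2\alpha_1+\alpha_3)+4\mu_{\rm c}}{\rho j\mu_{\rm e}\tau_{\rm c}^2}$ and a $2\times 2$ block in the $(1,3)$ coordinates. Under \eqref{d12} the scalar block is visibly positive. For the $2\times 2$ block I would apply Sylvester's criterion: the $(1,1)$ entry $\frac{k^2(\mu_{\rm e}+\mu_{\rm c})}{\rho}$ is positive, and the determinant, after clearing common factors, reduces to
\[
\frac{k^2}{\rho^2 j\mu_{\rm e}\tau_{\rm c}^2}\Bigl[(\mu_{\rm e}+\mu_{\rm c})\mu_{\rm e}L_{\rm c}^2 k^2(\alpha_1+\alpha_2)+4\mu_{\rm c}\mu_{\rm e}\Bigr],
\]
where the crucial cancellation is $4\mu_{\rm c}(\mu_{\rm e}+\mu_{\rm c})-4\mu_{\rm c}^2=4\mu_{\rm c}\mu_{\rm e}$, which is strictly positive under \eqref{d12}. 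Hence both leading principal minors of the $2\times 2$ block are positive, and $\boldsymbol{\mathcal{Z}}$ is positive definite.

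There is no real obstacle here; the only point worth attention is the algebraic simplification in the $2\times 2$ determinant, where the off-diagonal contribution $\frac{4k^2\mu_{\rm c}^2}{\rho^2 j\mu_{\rm e}\tau_{\rm c}^2}$ must be shown to be strictly dominated by the diagonal product, which works precisely because the $+4\mu_{\rm c}$ inside the $(3,3)$ entry combines with the $\mu_{\rm e}+\mu_{\rm c}$ factor to leave the positive residual $4\mu_{\rm c}\mu_{\rm e}$.
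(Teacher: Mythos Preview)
Your proof is correct and follows essentially the same approach as the paper: both verify symmetry by inspection and positive definiteness by Sylvester's criterion, arriving at the same key cancellation $4\mu_{\rm c}(\mu_{\rm e}+\mu_{\rm c})-4\mu_{\rm c}^2=4\mu_{\rm c}\mu_{\rm e}$. The only cosmetic difference is that the paper computes the leading principal minors of the unscaled matrices $\mathbf{X},\mathbf{Z}$ and then transfers positive definiteness to $\boldsymbol{\mathcal{X}},\boldsymbol{\mathcal{Z}}$ via the congruence $\boldsymbol{\mathcal{X}}=\widehat{\id}^{-1/2}\mathbf{X}\,\widehat{\id}^{-1/2}$, whereas you work directly with the scaled matrices and exploit the block structure to reduce the $3\times 3$ check to a scalar plus a $2\times 2$ block.
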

 \begin{proof}
 	Symmetry is clear and it is easy to compute that	\begin{align}
 	{\mathbf{X}}_{11}&= k^2(\mu_{\rm e} +\mu_{\rm c}) \,,\qquad 
 	{\mathbf{X}}_{11}{\mathbf{X}}_{22}-{\mathbf{X}}_{12}{\mathbf{X}}_{21}=k^4\,\mu_{\rm e}\,L_{\rm c}^2\,( \mu_{\rm e} +\mu_{\rm c} )(\alpha_1+\alpha_2)\,,\\ 
 	\det {{\mathbf{X}}}&=k^6\,\mu_{\rm e}^2\,L_{\rm c}^4\,( \mu_{\rm e} +\mu_{\rm c} )(\alpha_1+\alpha_2)(2\alpha_1+\alpha_3).\notag
 	\end{align}
 	Therefore, our constitutive  hypothesis imply that ${\mathbf{X}}$ is positive-definite. In addition,  ${\mathbf{Z}}$ is positive-definite if and only if the principal minors are positive, namely
 	\begin{align}
 	{\mathbf{Z}}_{11}&=k^2(\mu_{\rm e} +\mu_{\rm c}),\qquad
 	{\mathbf{Z}}_{11}{\mathbf{Z}}_{22}-{\mathbf{Z}}_{12}{\mathbf{Z}}_{21}=k^2(\mu_{\rm e} +\mu_{\rm c})\left[{\mu_{\rm e}\,L_{\rm c}^2}\,k^2\,(2\alpha_1+\alpha_3)+4 \mu_{\rm c}\right]\,,\\
 	\det({\mathbf{Z}})&=\mu_{\rm e}\,k^2\left[\mu_{\rm e}\,L_{\rm c}^2\,k^2\,(2\alpha_1+\alpha_3)+4 \mu_{\rm c}\right]\left[L_{\rm c}^2\,k^2(\mu_{\rm e} +\mu_{\rm c})(\alpha_1+\alpha_2)+4 \mu_{\rm c}\right]\,,\notag
 	\end{align}
 i.e. under the hypothesis of the lemma. 
 Since $\mathbf{X}$ and $\mathbf{Z}$ are positive definite, so there are $\boldsymbol{\mathcal{X}}$ and $\boldsymbol{\mathcal{Z}}$ defined by \eqref{nTQ}, and the proof is complete.
 \end{proof}
 \subsection{The limiting speed}

We now  seek  a solution $y$ of the differential system \eqref{n11} in the form 
 \begin{align}\label{x7}
 y(x_2)=\begin{footnotesize}\begin{pmatrix} 	d_3
 	\\
 	d_1
 	\\d_2
 	\end{pmatrix}\end{footnotesize} \,e^{{\rm i}\,r\,k\,x_2},\qquad \text{Im}\,r>0,
 \end{align}
 where $r \, \in \mathbb{C}$ is a complex parameter, $d=\begin{footnotesize}\begin{pmatrix} 	d_3,
 &
 d_1,
 &d_2
 \end{pmatrix}\end{footnotesize}^T \in \mathbb{C}^3 $, $d
 \neq 0$ is the amplitude and $ \text{Im}\,r$ is the coefficient of the imaginary part of $r$. The  condition $\text{Im}\,r>0$ ensures the asymptotic decay condition \eqref{04}. 
 Inserting $\eqref{x7}$ in $\eqref{n11}_1$ we obtain the  systems of algebraic equations
 \begin{align}\label{x8}
 [r^2\boldsymbol{\mathcal{X}}+r\,(\boldsymbol{\mathcal{Y}}+\boldsymbol{\mathcal{Y}}^T)+\boldsymbol{\mathcal{Z}}-\ k^2 v^2 {\id}]\,d=0,\qquad 
 [r\,\boldsymbol{\mathcal{X}}+\boldsymbol{\mathcal{Y}}^T]\,d=0.
 \end{align}
 The characteristic equation corresponding to the eigenvalue problem \eqref{x8}$_1$, i.e., the condition to have a nontrivial solution of $d=\begin{footnotesize}\begin{pmatrix} 	d_3,
&
 d_1,
&d_2
 \end{pmatrix}\end{footnotesize}^T$, is 
 \begin{align}\label{x9}
 \det\,[r^2\boldsymbol{\mathcal{X}}+r(\boldsymbol{\mathcal{Y}}+\boldsymbol{\mathcal{Y}}^T)+\boldsymbol{\mathcal{Z}}-k^2 v^2 {\id}]=0,
 \end{align}
 which gives six roots as eigenvalue $r$. The associated eigenvectors $d$ can be determined for the corresponding eigenvalues. 
\begin{definition}
	By the {\bf limiting speed} we understand a speed $\widehat{v}>0$, such that for all wave speeds satisfying 	$0\le v<\widehat{v}$  (subsonic speeds) the roots  of the characteristic equation \eqref{x9} are not real\footnote{{Since the quadratic equation does not have real solutions, there is a complex solution $r$ for which ${\rm Im}\, r>0$, since the complex solutions are pair-conjugated. Therefore, the existence of such a solution $r$, i.e., ${\rm Im}\, r>0$, implies the existence of a  wave propagating in the direction $x_1$ with the
		phase velocity $v$ and decaying exponentially in the direction $x_2$.}} and vice versa, i.e., if the roots  of the characteristic equation \eqref{x9} are not real then they correspond to wave speeds $v$ satisfying 	$0\le v<\widehat{v}$.
\end{definition} 
 \begin{proposition}\label{lemmaGH} Let  {us} assume that  the constitutive coefficients satisfy the conditions 	\eqref{d12}. The following  {holds} true:
 	\begin{enumerate}
 \item	There	exists a limiting speed $\widehat{v}>0$. Furthermore,   if one root   $r_v$ of the characteristic equation \eqref{x9} is real then it corresponds to a speed $v\geq \widehat{v}$ (non-admissible).
 \item For all $\theta\in \left(-\frac{\pi}{2},\frac{\pi}{2}\right)$ and $k>0$, the tensor $\boldsymbol{\mathcal{Z}}_\theta:=\sin^2\theta\boldsymbol{\mathcal{X}}+\sin\theta\cos\theta (\boldsymbol{\mathcal{Y}}+\boldsymbol{\mathcal{Y}}^T)+\cos^2\theta\boldsymbol{\mathcal{Z}}$ is positive definite.
 \item For all $\theta\in \left(-\frac{\pi}{2},\frac{\pi}{2}\right)$, $k>0$ and $0\le v<\widehat{v}$, the tensor $\widetilde{\boldsymbol{\mathcal{Z}}}_\theta:=\sin^2\theta\,\boldsymbol{\mathcal{X}}+\sin\theta\cos\theta (\boldsymbol{\mathcal{Y}}+\boldsymbol{\mathcal{Y}}^T)+\cos^2\theta\,\boldsymbol{\mathcal{Z}}-k^2 v^2 \cos^2\theta \,{\id}$ is positive definite.
 \end{enumerate}
 \end{proposition}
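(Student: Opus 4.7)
\medskip
\noindent\textbf{Proof plan.} My plan is to reduce parts (2) and (3) to a uniform statement about the real symmetric matrix pencil
\[
P(r) := r^2 \boldsymbol{\mathcal{X}} + r(\boldsymbol{\mathcal{Y}}+\boldsymbol{\mathcal{Y}}^T) + \boldsymbol{\mathcal{Z}},
\]
and to define $\widehat{v}$ through a variational characterisation of this pencil. For $\theta \in (-\pi/2, \pi/2)$ one has $\cos^2\theta > 0$, so
\[
\widetilde{\boldsymbol{\mathcal{Z}}}_\theta = \cos^2\theta \cdot \bigl[P(\tan\theta) - k^2 v^2 \id\bigr], \qquad \boldsymbol{\mathcal{Z}}_\theta = \cos^2\theta \cdot P(\tan\theta).
\]
Since $r=\tan\theta$ sweeps $\mathbb{R}$ as $\theta$ ranges over $(-\pi/2,\pi/2)$, positive-definiteness of $\boldsymbol{\mathcal{Z}}_\theta$ (resp.\ $\widetilde{\boldsymbol{\mathcal{Z}}}_\theta$) for all admissible $\theta$ is equivalent to $P(r) > 0$ (resp.\ $P(r) > k^2 v^2 \id$) for all $r\in\mathbb{R}$, and the characteristic equation \eqref{x9} asks precisely whether $P(r) - k^2 v^2 \id$ is singular for some real $r$.

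First, I would prove (2). For fixed $d \in \mathbb{R}^3\setminus\{0\}$,
\[
d^T P(r) d = (d^T \boldsymbol{\mathcal{X}} d)\, r^2 + \bigl(d^T(\boldsymbol{\mathcal{Y}}+\boldsymbol{\mathcal{Y}}^T)d\bigr)\, r + d^T \boldsymbol{\mathcal{Z}} d
\]
is a real quadratic in $r$ whose leading and constant coefficients are strictly positive by the preceding lemma and \eqref{d12}. Strict positivity on $\mathbb{R}$ therefore reduces to the discriminant inequality
\[
\bigl[d^T(\boldsymbol{\mathcal{Y}}+\boldsymbol{\mathcal{Y}}^T)d\bigr]^2 < 4\,(d^T \boldsymbol{\mathcal{X}} d)(d^T \boldsymbol{\mathcal{Z}} d),
\]
which I would verify by direct algebraic computation with the explicit entries in \eqref{nTQ}: noting that $d^T(\boldsymbol{\mathcal{Y}}+\boldsymbol{\mathcal{Y}}^T)d$ factors as $d_1$ times a linear combination of $d_2$ and $d_3$, the cross term can be absorbed by completing squares using the diagonal contributions of $\boldsymbol{\mathcal{X}}$ and $\boldsymbol{\mathcal{Z}}$ that carry $\mu_c$ and the curvature weights $\alpha_1,\alpha_2,\alpha_3$.

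With (2) established, I would set
\[
\widehat{v}^2 := \frac{1}{k^2}\inf_{\|d\|=1,\ r\in\mathbb{R}} d^T P(r) d = \frac{1}{k^2}\inf_{\|d\|=1}\left[ d^T \boldsymbol{\mathcal{Z}} d - \frac{\bigl[d^T(\boldsymbol{\mathcal{Y}}+\boldsymbol{\mathcal{Y}}^T)d\bigr]^2}{4\,d^T\boldsymbol{\mathcal{X}} d}\right],
\]
the second equality coming from minimising the $r$-quadratic at fixed $d$. By (2) the bracket is continuous and strictly positive on the compact unit sphere, so $\widehat{v}>0$ and the infimum is attained. If some real root $r_v$ of \eqref{x9} existed for $v<\widehat{v}$, a corresponding null vector $d_0 \neq 0$ would satisfy $d_0^T P(r_v) d_0 = k^2 v^2 \|d_0\|^2 < k^2 \widehat{v}^2 \|d_0\|^2$, contradicting the definition of $\widehat{v}$; this yields (1). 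Part (3) follows directly from
\[
d^T[P(r) - k^2 v^2 \id] d \geq k^2(\widehat{v}^2 - v^2)\,\|d\|^2 > 0 \qquad \text{for all real } r \text{ and } d\neq 0,
\]
which, translated back via $r = \tan\theta$, delivers $\widetilde{\boldsymbol{\mathcal{Z}}}_\theta > 0$ on $(-\pi/2,\pi/2)$.

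The hard part is the discriminant inequality in (2). The symmetric combination $\boldsymbol{\mathcal{Y}}+\boldsymbol{\mathcal{Y}}^T$ couples the Cosserat constant $\mu_c$ to the curvature parameters $\alpha_i$ in a way that admits no clean orthogonal splitting analogous to the Cartan-type decomposition that makes $\boldsymbol{\mathcal{X}}$ and $\boldsymbol{\mathcal{Z}}$ individually positive-definite under \eqref{d12}; consequently one must work component-wise and invoke every sign restriction in \eqref{d12} together to close the estimate.
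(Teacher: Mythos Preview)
Your variational route is correct and the three parts chain together exactly as you describe; your definition of $\widehat{v}$ agrees with the paper's (once one unwinds its body--wave description, the paper's $\widehat{v}^2$ is precisely $\frac{1}{k^2}\inf_{r\in\mathbb{R}}\lambda_{\min}(P(r))$). The genuine difference is in how part (2) is obtained. The paper does \emph{not} verify the discriminant inequality by hand. Instead it observes that
\[
\boldsymbol{\mathcal{Z}}_\theta \;=\; \widehat{\id}^{-1/2}\,\mathbf{Q}_2\bigl((\cos\theta,\sin\theta,0),k\bigr)\,\widehat{\id}^{-1/2},
\]
i.e.\ $\boldsymbol{\mathcal{Z}}_\theta$ is (after symmetric conjugation) the acoustic tensor for plane body waves propagating in the direction $n_\theta=(\cos\theta,\sin\theta,0)$. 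Proposition~\ref{proprealw2} already characterises \eqref{d12} as equivalent to positive definiteness of this acoustic tensor for every such direction, so part (2) is immediate and the tedious discriminant computation you flag as ``the hard part'' is bypassed entirely. What your approach buys is self-containment: you never invoke the plane-wave proposition, and the variational formula for $\widehat{v}^2$ makes (1) and (3) one-line consequences. What the paper's approach buys is that the only algebraic work sits inside Proposition~\ref{proprealw2} (done once, for $\xi=e_1$, with isotropy doing the rest), and the proof here becomes purely structural; it also makes transparent that $\widehat{v}$ is the slowest body-wave speed over all in-plane propagation directions, which is the physically expected characterisation of the limiting speed.
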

 \begin{proof} The proof is quite similar to the equivalent proposition concerning the Rayleigh wave, see \cite{khan2022existence}. However, since there are different calculations, we write explicitly some key points.
 	\begin{enumerate}
 		\item 
Assume that there exists a real $r_v$ as solution  of the characteristic equation \eqref{x9}, then $ \exists \,\theta \in (-\frac{\pi}{2},\frac{\pi}{2})$ such that $r_v=\tan \theta$. Therefore, corresponding to \eqref{x7},  $\mathcal{U}$  given by \eqref{x5}  and defined by $r_v$	turns into
 \begin{align}\label{46}
 \mathcal{U}(x_1,x_2,t)=\begin{footnotesize}
\begin{pmatrix} u_3(x_1,x_2,t)
 	\\
 	\vartheta _1(x_1,x_2,t)
 	\\\vartheta _2(x_1,x_2,t)
 	\end{pmatrix}\end{footnotesize}&=
\begin{footnotesize}
\begin{pmatrix} {\rm i}\, \,d_3 \\
	d_1
 	\\
 	d_2
 	\end{pmatrix}\end{footnotesize}e^{ik\, (  x_1+\tan\theta x_2-vt)}=\begin{footnotesize}
 		\begin{pmatrix} {\rm i}\, \,d_3 \\
	d_1
 	\\
 	d_2
 	\end{pmatrix}\end{footnotesize}e^{\frac{ik}{\cos\theta}(\cos\theta x_1+\sin\theta x_2-\cos\theta vt)},
 \end{align}
 which means that $\mathcal{U}(x_1,x_2,t)$ is   a non-trivial  plane body wave solution with  wave number $\frac{k}{\cos\theta}$, the speed $v_\theta=v\cos\theta$ and propagation in the direction ${n}_\theta$ where 
 $	{n}_\theta=(\cos\theta,\sin\theta,0)$. A direct substitution of \eqref{46} into \eqref{PDE} together with the notation  $\begin{footnotesize}\begin{pmatrix}	f_3
 		\\
 		f_1
 		\\	f_2
 \end{pmatrix}\end{footnotesize}=\widehat{\id}^{1/2}\begin{footnotesize}\begin{pmatrix}	d_3
 		\\
 		d_1
 		\\	d_2
 \end{pmatrix}\end{footnotesize}$ 
 imply the existence of a non-trivial solution  $\begin{footnotesize}\begin{pmatrix}f_1,&f_2,&f_3\end{pmatrix}\end{footnotesize}\neq 0$ of the algebraic system written in matrix format
\begin{align}\label{x093}
\left[	\sin^2\theta\,\boldsymbol{\mathcal{X}}+\sin\theta\cos\theta (\boldsymbol{\mathcal{Y}}+\boldsymbol{\mathcal{Y}}^T)+\cos^2\theta\,\boldsymbol{\mathcal{Z}}-k^2 v^2_\theta \,{\id}\right]\,\begin{footnotesize}\begin{pmatrix} 	f_3
\\
f_1
\\	f_2
\end{pmatrix}\end{footnotesize}=0.
\end{align}

Let us observe that  equation \eqref{x093}  is actually the propagation condition for plane waves in isotropic Cosserat materials, in the fixed direction $	{n}_\theta=(\cos\theta,\sin\theta,0)$. Since  the constitutive coefficients satisfy the conditions 	\eqref{d12}, according to Proposition \ref{proprealw2}, for the direction $	{n}_\theta=(\cos\theta,\sin\theta,0)$ in particular, the system of partial differential equations \eqref{PDE} admits a non trivial solution in the form
\begin{align}\label{ansatzwp}
u(x_1,x_2,t)&={\rm i}\,\begin{footnotesize}\begin{pmatrix}0\\0\\\widehat{u}_3\end{pmatrix}\end{footnotesize}
\, e^{{\rm i}\, \left(k\langle {\xi},\, x\rangle_{\mathbb{R}^3}-\,\omega \,t\right)}\,,\qquad\qquad  \vartheta (x_1,x_2,t)=\begin{footnotesize}\begin{pmatrix}\widehat{\vartheta }_1\\\widehat{\vartheta }_2\\0\end{pmatrix}\end{footnotesize} \, e^{{\rm i}\, \left(k\langle {\xi},\, x\rangle_{\mathbb{R}^3}-\,\omega \,t\right)},\\&
(\widehat{u}_3,\widehat{\vartheta }_1, \widehat{ \vartheta_2   })^T\in\mathbb{C}^{3}, \quad (\widehat{u}_3,\widehat{\vartheta }_1, \widehat{ \vartheta_2   })^T\neq 0\,\notag
\end{align}
only for real positive values $\omega^2$.
To each  $\theta \in(-\frac{\pi}{2},\frac{\pi}{2})$  we associate these  real frequencies $\omega_\theta$ satisfying 
\begin{align}
\det\,\{	\sin^2\theta\boldsymbol{\mathcal{X}}+\sin\theta\cos\theta (\boldsymbol{\mathcal{Y}}+\boldsymbol{\mathcal{Y}}^T)+\cos^2\theta\boldsymbol{\mathcal{Z}}-\omega^2_\theta \,{\id}\}=0.
\end{align}
Then, each $\omega_\theta$ defines a $v_\theta$ such that $ \omega_\theta=v_\theta\cos\theta$. We define $\widehat{v}$ as the minimum of the values of $v_\theta\cos\theta$ for all $\theta\in(-\frac{\pi}{2},\frac{\pi}{2})$. Hence,  this means that after  {we} know all  solutions $v_\theta$ of 
 \begin{align}\label{x11}
 \det\,\{	\sin^2\theta\boldsymbol{\mathcal{X}}+\sin\theta\cos\theta (\boldsymbol{\mathcal{Y}}+\boldsymbol{\mathcal{Y}}^T)+\cos^2\theta\boldsymbol{\mathcal{Z}}-k^2 v^2_\theta \cos^2\theta \,{\id}\}=0,
 \end{align}
 we  {can} define 
 \begin{align}
 \widehat{v}=\inf_{\theta\in(-\frac{\pi}{2},\frac{\pi}{2})} v_\theta.\label{lims}
 \end{align}
 
 In conclusion,  if  there exists a value of $v$ such that the equation \eqref{x9} admits a real solution $r_v$, then $v$  must satisfy 
 $
 v\geq \widehat{v}.
$
Thus,  if $v$ is such that
 $
 0\le v<\widehat{v},
 $
 then $r$ can not be real and if $r$ is real, then $v\geq\widehat{v}$.
\item 
	Since  the constitutive coefficients satisfy the conditions 	\eqref{d12}, according to Proposition \ref{proprealw2}, it follows that there exists \textit{only} real numbers $\omega$ such that the following  system\footnote{This system follows directly by inserting  \eqref{ansatzwp} into \eqref{PDE} and obtaining that   $\widehat{u}_3, \widehat{\vartheta}_1$ and $\widehat{\vartheta}_2$ have to satisfy the following partial differential equations
		\begin{align*}
		-\omega^2\rho\,\widehat{u}_3&=-\,k^2\,(\mu_{\rm e} +\mu_{\rm c})\dd \widehat{u}_3 \sum_{l=1}^2 \xi_l^2-2k\mu_{\rm c}\,\xi_l\,\widehat{\vartheta }_2 +2k\mu_{\rm c}\,\xi_2\,\widehat{\vartheta }_1 ,
		\\
		\omega^2\rho\, j\,\mu_{\rm e}\,\tau_{\rm c}^2\,\widehat{ \vartheta }_1&=k^2\,{\mu_{\rm e}L_{\rm c}^2}\,(2\alpha_1+\alpha_3)\widehat{ \vartheta }_1\xi_1^2+k^2\,{\mu_{\rm e}L_{\rm c}^2}\,(\alpha_1+\alpha_2)\widehat{ \vartheta }_1\xi_2^2+k^2{\mu_{\rm e}L_{\rm c}^2}\,(\alpha_1-\alpha_2+\alpha_3)\widehat{ \vartheta }_2\xi_1\xi_2 -2\,k\,\mu_{\rm c}\widehat{u}_3 \xi_2+4\, \mu_{\rm c} \widehat{\vartheta }_1,\vspace{1mm}\notag\\
		\omega^2\rho\, j\,\mu_{\rm e}\,\tau_{\rm c}^2\,\widehat{ \vartheta }_2&=k^2\,{\mu_{\rm e}L_{\rm c}^2}\,(\alpha_1+\alpha_2)\widehat{ \vartheta }_2\xi_1^2+k^2\,{\mu_{\rm e}L_{\rm c}^2}\,(2\alpha_1+\alpha_3)\widehat{ \vartheta }_2\xi_2^2+k^2{\mu_{\rm e}L_{\rm c}^2}\,(\alpha_1-\alpha_2+\alpha_3)\widehat{ \vartheta }_1\xi_1\xi_2 +2\,k\,\mu_{\rm c}\widehat{u}_3 \xi_1+4\, \mu_{\rm c} \widehat{\vartheta }_2.\notag\\
		\end{align*}}  admits non trivial solutions $w=\begin{footnotesize}\begin{pmatrix}
	\widehat{u}_3, \widehat{\vartheta}_1, \widehat{\vartheta}_2
	\end{pmatrix}\end{footnotesize}^T\neq 0$  for  real positive numbers $\omega^2$, i.e.,
	\begin{align}\label{vz1}
	[\widetilde{\mathbf{Q}}_2(\xi,k)-\omega^2\widehat{\id}]\, w=0,
	\end{align}
		where  
	\begin{align}\label{Q_1}
		\mathbf{\mathbf{Q}}_2(\xi,k)&=\begin{footnotesize}\begin{footnotesize}\begin{pmatrix}
					k^2(\mu_{\rm e} +\mu_{\rm c})& -2\mu_{\rm c}\,k\,\xi_2 & 2\mu_{\rm c}\,k\,\xi_1\vspace{2mm}\\
					-2\mu_{\rm c}\,k\,\xi_2 & {\mu_{\rm e}\,L_{\rm c}^2}\,k^2\left[(2\alpha_1+\alpha_3)\xi_1^2+(\alpha_1+\alpha_2)\xi_2^2\right]+4 \mu_{\rm c} & {\mu_{\rm e}\,L_{\rm c}^2}\,k^2\,(\alpha_1-\alpha_2+\alpha_3)\xi_1\xi_2\vspace{2mm}\\
					2\mu_{\rm c}\,k\,\xi_1 & {\mu_{\rm e}\,L_{\rm c}^2}\,k^2\,(\alpha_1-\alpha_2+\alpha_3)\xi_1\xi_2 & {\mu_{\rm e}\,L_{\rm c}^2}\,k^2\left[(\alpha_1+\alpha_2)\xi_1^2+(2\alpha_1+\alpha_3)\xi_2^2\right]+4 \mu_{\rm c}
		\end{pmatrix}\end{footnotesize}\end{footnotesize}
	\end{align}

	But this is equivalent to  the positive definiteness of the matrix
	$
		\widetilde{\mathbf{\mathbf{Q}}}_2(\xi,k)
		=\widehat{\id}^{-1/2}	\mathbf{\mathbf{Q}}_2(\xi,k)\widehat{\id}^{-1/2}.
$
	Since the conditions \eqref{d12} imply the positive definiteness of the matrix $\widetilde{\mathbf{\mathbf{Q}}}_1(\xi,k)$ for all $\xi=(\xi_1,\xi_2,0)\in \mathbb{R}^3$, $\lVert \xi\rVert=1$, taking $\xi=(\cos\,\theta, \sin\,\theta,0)$ we deduce  the desired result.
\item 

Since $\boldsymbol{\mathcal{Z}}_\theta:=\sin^2\theta\boldsymbol{\mathcal{X}}+\sin\theta\cos\theta (\boldsymbol{\mathcal{Y}}+\boldsymbol{\mathcal{Y}}^T)+\cos^2\theta\boldsymbol{\mathcal{Z}}$ is positive definite, it admits only positive eigenvalues.   Assuming that there exist  $\theta_0\in \left(-\frac{\pi}{2},\frac{\pi}{2}\right)$ and  $v_0\in[0,\widehat{v})$ for which there is an eigenvalue $\lambda_{\theta_0}$   of $\boldsymbol{\mathcal{Z}}_\theta$
such that $\lambda_{\theta_0}<k^2 v^2_0 \cos^2\theta_0 $, then 
\begin{align}
v_{\theta_0}:=\sqrt{\frac{\lambda_{\theta_0}}{k^2\,\cos^2\theta_0}}<v_0<\widehat{v}
\end{align}
is solution of \eqref{x11}, i.e., for fixed $\theta_0$ we have that $v_{\theta_0}<\widehat{v}$  verifies
 \begin{align}\label{x11n}
\det\,\{	\sin^2{\theta_0}\,\boldsymbol{\mathcal{X}}+\sin{\theta_0}\,\cos{\theta_0} \,(\boldsymbol{\mathcal{Y}}+\boldsymbol{\mathcal{Y}}^T)+\cos^2{\theta_0}\,\boldsymbol{\mathcal{Z}}-k^2 v^2_{{\theta_0}}\, \cos^2{\theta_0} \,{\id}\}=0.
\end{align} 
This is in contradiction to the definition of the limiting speed and Proposition \ref{lims}, since $\widehat{v}$ is the smallest speed having this property. Therefore, it remains that for all  $\theta\in \left(-\frac{\pi}{2},\frac{\pi}{2}\right)$, $k>0$ and for all $0\le v<\widehat{v}$, all the eigenvalues of $\boldsymbol{\mathcal{Z}}_\theta$ are larger than $k^2 v^2 \cos^2\theta$ and the proof is complete.\qedhere
	\end{enumerate}
\end{proof}

 \subsection{Derivation and matrix analysis of the algebraic Riccati equation}
 The main ingredient of the method used by Fu and Mielke \cite{fu2002new} is to  look at \eqref{n11} as  an initial value problem and to search for a solution in the form 
 \begin{align}\label{08}
 y(x_2)=e^{-k\,x_2 \,\boldsymbol{\mathcal{E}}}y(0),
 \end{align}
 where $\boldsymbol{\mathcal{E}}\in\mathbb{C}^{3 \times 3}$  is to be determined. On substituting \eqref{08} into  \eqref{x8}, we get
 \begin{align}\label{09}
 [\boldsymbol{\mathcal{X}}\boldsymbol{\mathcal{E}}^2- {\rm i}\, (\boldsymbol{\mathcal{Y}}+\boldsymbol{\mathcal{Y}}^T)\boldsymbol{\mathcal{E}}-\boldsymbol{\mathcal{Z}}+\ k^2v^2 {\id}]\, y(x_2)=0, \qquad\qquad \qquad  (-\boldsymbol{\mathcal{X}}\boldsymbol{\mathcal{E}}+ {\rm i}\, \,\boldsymbol{\mathcal{Y}}^T)\,y(0)=0.
 \end{align}
 It is clear that in order to have a proper decay the eigenvalues of $\boldsymbol{\mathcal{E}}$ have to be such that their real part is positive. 
 
 For the linear Cosserat model,  we introduce the so called  \textbf{surface impedance matrix} 
 \begin{align}\label{10}
\boldsymbol{\boldsymbol{\mathcal{M}}}=-(-	\boldsymbol{\mathcal{X}}\boldsymbol{\mathcal{E}}+ {\rm i}\,\boldsymbol{\mathcal{Y}}^T)\qquad \iff \qquad \boldsymbol{\mathcal{E}}=\boldsymbol{\mathcal{X}}^{-1}(\boldsymbol{\mathcal{M}}+ {\rm i}\, \,\boldsymbol{\mathcal{Y}}^T).
 \end{align}
 The reason to consider  this replacement of $\boldsymbol{\mathcal{E}}$ to $\boldsymbol{\mathcal{M}}$ is to convert the equation $\eqref{09}_{2}$ into an  equation for a Hermitian matrix $\boldsymbol{\mathcal{M}}$. On substituting $\eqref{10}_2$ into $\eqref{09}_1$, we obtain
 \begin{align}\label{12}
 (\boldsymbol{\mathcal{M}}- {\rm i}\,\boldsymbol{\mathcal{Y}})\boldsymbol{\mathcal{X}}^{-1}(\boldsymbol{\mathcal{M}}+ {\rm i}\, \mathcal{R^T})-\boldsymbol{\mathcal{Z}}+k^2\, v^2\,{\id}=0, \qquad\boldsymbol{\boldsymbol{\mathcal{M}}}\, y(0)=0.
 \end{align}
 
  We call equation $\eqref{12}_1$  \textbf{algebraic Riccati equation} for the propagation of Love waves in  {the} linear Cosserat model.  
  
   From this point  {on}, the entire problem is reduced to a purely mathematical problem. Moreover, due to the properties of the known matrices involved in the Riccati equation, as well as the definition of the limiting speed, we are exactly in the framework of the mathematical results obtained by Fu and Mielke \cite{mielke2004uniqueness}. 
 Since we are interested in a nontrivial solution  $y$, we impose $y(0)
\neq0$, so that the matrix $\boldsymbol{\mathcal{M}}$ has to satisfy
 \begin{align}\label{x16}
 {\rm det}\,\boldsymbol{\mathcal{M}}=0.
 \end{align}
 	The  equation \eqref{x16} is called  \textbf{secular equation} for the {\bf Love waves propagation} in linear Cosserat model in terms of the \textbf{impedance matrix} $\boldsymbol{\mathcal{M}}$.

 The unknown  {quantities} are  {now} $v$ and $\boldsymbol{\mathcal{M}}$ solutions of the equations \eqref{x16} and  {\eqref{12}}. These equations are coupled and nonlinear.  On one hand, there is not only one   Hermitian matrix $\boldsymbol{\mathcal{M}}$ satisfying \eqref{12}. On the other hand, it is not clear if there exists  {a unique} solution $v$ of \eqref{x16}, even if all Hermitian matrix $\boldsymbol{\mathcal{M}}$ satisfying \eqref{12} are known. The strategy which works is to look at the equations \eqref{12} and to remark that  equation \eqref{12}$_1$ leads to a mapping $v\mapsto\boldsymbol{\boldsymbol{\mathcal{M}}}_v$, where $\boldsymbol{\mathcal{M}}_v$ is the solution of \eqref{12}$_1$ for a fixed $v$. Then equation \eqref{x16}  becomes the equation which determines the wave speed  $v$.  However, first we need to identify the domain of those $v$ for which  \eqref{12}$_1$ admits a unique solution. Then we  prove the existence and uniqueness of the solution $v$ of the secular equation. But this solution $v$ has to be such that the  corresponding   solution $\boldsymbol{\mathcal{M}}_v$ of the Riccati equation \eqref{12} assures  that $\text{Re(spec}\,\boldsymbol{\mathcal{E}}{\rm )}$ is positive, 	where ``$\text{\rm Re\,(spec\,}\boldsymbol{\mathcal{E}}{\rm )}$" means the ``real part of spectra of $\boldsymbol{\mathcal{E}}$".

 Everything we have done until now in the paper  {shows} that we have put the problem in the framework of   the general result obtained by  Fu and Mielke in \cite{fu2001nonlinear,fu2002new} (see also the work by Mielke  and Sprenger \cite{mielke1998quasiconvexity}) and it follows  {directly}:
 \begin{theorem}{\rm {\bf [The main result of this paper]}}\label{fmth} 
 	\begin{enumerate}
 		\item 	If $0\leq v<\widehat{v}$, then the matrix problem 
 		\begin{align}\label{13}
 			\boldsymbol{\mathcal{X}}{\mathcal{E}}^2- {\rm i}\, (  \boldsymbol{\mathcal{Y}}+   \boldsymbol{\mathcal{Y}}^T)\mathcal{E}-  \boldsymbol {\mathcal{Z}}+ k^2 v^2{\id}=0,\qquad \textrm{\rm Re\,spec\,}  \boldsymbol{\mathcal{E}}>0,
 		\end{align}
 		where $\textrm{\rm Re\,spec\,}  \boldsymbol{\mathcal{E}} $  means the real part of the  spectra of $  \boldsymbol{\mathcal{E}}$,  has a unique solution for $  \boldsymbol{\mathcal{E}}$ and $  {  \boldsymbol{\mathcal{M}}}:=-(-	\boldsymbol{\mathcal{X}}  \boldsymbol{\mathcal{E}}+ {\rm i}\,  \boldsymbol{\mathcal{Y}}^T)$ is hermitian.

 		\item 
 		If $0\leq v<\widehat{v}$, then the unique solution $\mathcal{M}$ of the \textit{algebraic Riccati equation} 
 		\begin{align}\label{120}
 			(  \boldsymbol{\mathcal{M}}- {\rm i}\,  \boldsymbol{\mathcal{Y}})  \boldsymbol{\mathcal{X}}^{-1}(  \boldsymbol{\mathcal{M}}+ {\rm i}\, \boldsymbol{\mathcal{Y}}^T)- \boldsymbol {\mathcal{Z}}+k^2\, v^2\,{\id}=0, \qquad  {  \boldsymbol{\mathcal{M}}}\, y(0)=0.
 		\end{align}
 		that satisfies  $\text{\rm Re\,spec}\,(   \boldsymbol{\mathcal{X}}^{-1}( \boldsymbol {\mathcal{M}}+{\rm i}\,  \boldsymbol{\mathcal{Y}}^T))>0$ is given explicitly by
 		\begin{align}\label{fmv}
 			\boldsymbol{\mathcal{M}}=\Big(\int_{0}^{\pi}	  \boldsymbol {\mathcal{X}}_  { \theta} ^{-1}\, d\theta\Big)^{-1}\Big(\pi\id-  {\rm i}\, \int_{0}^{\pi}   \boldsymbol{\mathcal{X}}_  { \theta} ^{-1}   {   \boldsymbol{\mathcal{Y}}}_\theta  ^T\, d\theta\Big),
 		\end{align}
 			where the  matrices $ \boldsymbol {\mathcal{X}}_  { \theta}$, $  \boldsymbol{\mathcal{Y}}_  { \theta}$ and $  \boldsymbol{\mathcal{Z}}_  { \theta}$ are  obtained by rotation of the old coordinate system
 		\begin{align}\label{21}
 			\boldsymbol{\mathcal{X}}_  { \theta} & = \cos^2\theta\,  \boldsymbol{\mathcal{X}}-\sin\theta\cos\theta\,(  \boldsymbol{\mathcal{Y}}+  \boldsymbol{\mathcal{Y}}^T)+\sin^2\theta\,\widetilde{  \boldsymbol{\mathcal{Z}}},\notag\vspace{2mm}\\
 			\boldsymbol{\mathcal{Y}}_\theta  & = \cos^2\theta\,  \boldsymbol{\mathcal{Y}}+\sin\theta\cos\theta
 			\,(  \boldsymbol{\mathcal{X}}-\widetilde{  \boldsymbol{\mathcal{Z}}})-\sin^2\theta\,  \boldsymbol{\mathcal{Y}}^T,\vspace{2mm}\\
 			\widetilde{  \boldsymbol{\mathcal{Z}}}_\theta & = \cos^2\theta\,\widetilde{  \boldsymbol{\mathcal{Z}}}+\sin\theta\cos\theta\,(  \boldsymbol{\mathcal{Y}}+  \boldsymbol{\mathcal{Y}}^T)+\sin^2\theta\,  \boldsymbol{\mathcal{X}}\notag
 		\end{align}
 			with   $\widetilde{  \boldsymbol{\mathcal{Z}}}=  \boldsymbol{\mathcal{Z}}-k^2\,v^2\,{\id}$.
 		\item 	If $0\leq v<\widehat{v}$, then  the solution  $  {\mathcal{M}}_v$ obtained from \eqref{120} has the following properties
 		\begin{enumerate}
 			\item $  \boldsymbol{\mathcal{M}}_v$ is hermitian,
 			\item  $\frac{d  {  \boldsymbol{\mathcal{M}}}_v}{d v}$ is negative definite,
 			\item  $\tr(  {  \boldsymbol{\mathcal{M}}}_v)\geq 0$, and $\langle  w,  {  \boldsymbol{\mathcal{M}}}_v\,w\rangle \geq 0$ for all real vectors $w$ for  all $0\leq v\leq  \widehat{v}$,
 			\item $  \boldsymbol{\mathcal{M}}_v$ is  positive definite.
 		\end{enumerate}
 		\item The secular equation
 		\begin{align}\label{secularEq}
 			\det  {  \boldsymbol{\mathcal{M}}}_v=0,
 		\end{align}
 		where $  \boldsymbol{\mathcal{M}}_v$ is obtained from $\eqref{120}$, has a unique admissible solution $0\leq v<\widehat{v}$. In other words there {\bf exists a unique Love wave propagating in the Cosserat medium}.
 	\end{enumerate}
 \end{theorem}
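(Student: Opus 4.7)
The plan is to verify that all structural hypotheses of the general Fu--Mielke theorem \cite{fu2002new,mielke2004uniqueness} hold in the Cosserat setting and then invoke that theorem almost verbatim. The lemma preceding this section already supplies symmetry and positive definiteness of $\boldsymbol{\mathcal{X}}$ and $\boldsymbol{\mathcal{Z}}$, while Proposition \ref{lemmaGH} provides the two remaining ingredients needed beyond positivity: the existence of a strictly positive limiting speed $\widehat{v}$, and the positive definiteness of the rotated composite $\widetilde{\boldsymbol{\mathcal{Z}}}_\theta$ for every $\theta\in(-\pi/2,\pi/2)$ and every subsonic $v$. These are exactly the hypotheses on which the abstract machinery depends.

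For statement (1), I would first observe that the quadratic matrix equation \eqref{13} is equivalent, through the substitution $\boldsymbol{\mathcal{M}}=\boldsymbol{\mathcal{X}}\boldsymbol{\mathcal{E}}-{\rm i}\,\boldsymbol{\mathcal{Y}}^T$, to the Riccati equation in \eqref{120}. Uniqueness of the admissible $\boldsymbol{\mathcal{E}}$ would follow from a Sylvester-type argument: subtracting two admissible Riccati solutions yields a Sylvester equation $A\Delta+\Delta B=0$ whose coefficient matrices have spectra with strictly separated real parts, forcing $\Delta=0$. Hermiticity of $\boldsymbol{\mathcal{M}}$ would then come by noting that the Hermitian adjoint of \eqref{120} is another Riccati equation of the same form for which the spectrum condition is preserved, so uniqueness forces $\boldsymbol{\mathcal{M}}=\boldsymbol{\mathcal{M}}^*$.

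For statement (2), the integral representation \eqref{fmv} would be obtained by the rotation/averaging trick of Fu--Mielke: one verifies that the rotated triple $(\boldsymbol{\mathcal{X}}_\theta,\boldsymbol{\mathcal{Y}}_\theta,\widetilde{\boldsymbol{\mathcal{Z}}}_\theta)$ in \eqref{21} solves a Riccati equation with the same impedance $\boldsymbol{\mathcal{M}}$, and then integrates the resulting identity over $\theta\in[0,\pi]$; the positive definiteness of $\widetilde{\boldsymbol{\mathcal{Z}}}_\theta$ from Proposition \ref{lemmaGH}(3) is precisely what ensures invertibility of $\boldsymbol{\mathcal{X}}_\theta$ under the integral, so the formula is well-defined. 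For statement (3), Hermiticity is already handled; differentiating the Riccati equation with respect to $v$ yields a Lyapunov-type equation for $d\boldsymbol{\mathcal{M}}_v/dv$ with right-hand side proportional to $-2\,k^2v\,\id$, and the spectrum condition gives the Lyapunov operator the right signature, from which negative definiteness of the derivative follows. Positive definiteness of $\boldsymbol{\mathcal{M}}_v$ and the trace estimate then follow by combining this monotonicity with the fact that at $v=0$ the formula \eqref{fmv} produces a manifestly positive definite matrix.

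The main obstacle, and the real payoff, is statement (4). The strategy is to exploit the monotonicity from (3)(b): because $v\mapsto\boldsymbol{\mathcal{M}}_v$ is strictly decreasing in the sense of Hermitian matrices on $[0,\widehat{v})$, with $\boldsymbol{\mathcal{M}}_0$ positive definite and hence $\det\boldsymbol{\mathcal{M}}_0>0$, I would analyse the behaviour of $\det\boldsymbol{\mathcal{M}}_v$ as $v\to\widehat{v}^-$. At the limiting speed the quadratic eigenvalue problem \eqref{x9} acquires a real root, and through the correspondence $\boldsymbol{\mathcal{M}}=\boldsymbol{\mathcal{X}}\boldsymbol{\mathcal{E}}-{\rm i}\,\boldsymbol{\mathcal{Y}}^T$ this forces the smallest eigenvalue of $\boldsymbol{\mathcal{M}}_v$ to reach zero, so $\det\boldsymbol{\mathcal{M}}_v$ must vanish at some $v^\ast\in(0,\widehat{v})$; strict eigenvalue monotonicity then guarantees that this crossing is unique. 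I expect the delicate point to be making the statement ``an eigenvalue of $\boldsymbol{\mathcal{M}}_v$ crosses zero as $v\uparrow\widehat{v}$'' fully rigorous in the Cosserat framework, which ultimately reduces to the precise way in which the positive definiteness of $\widetilde{\boldsymbol{\mathcal{Z}}}_\theta$ from Proposition \ref{lemmaGH}(3) degenerates exactly at $v=\widehat{v}$.
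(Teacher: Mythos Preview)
Your proposal is correct and follows exactly the paper's approach: the paper's own proof consists of a single sentence referring the reader to Fu and Mielke \cite{fu2002new,mielke2004uniqueness} and to \cite{khan2022existence}, on the grounds that the preceding lemma and Proposition \ref{lemmaGH} have placed the problem squarely within the Fu--Mielke framework. You have simply unpacked that invocation, sketching the Sylvester/Lyapunov/rotation-averaging arguments that Fu--Mielke actually contain, so your write-up is strictly more detailed than the paper's but not different in substance.
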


 \begin{proof}
 	The reader may consult the paper by Fu and Mielke \cite{fu2002new}  since we have transformed the Love wave propagation problem in the framework  {so that} the  Fu and Mielke's results could be directly applied. Complete explanations in our notations may be found in \cite{khan2022existence}, too.
 \end{proof}
 
 From a computational perspective, the key advantage of the aforementioned result lies in the fact that, since $\boldsymbol{\mathcal{X}}_{\theta}$ and $\boldsymbol{\mathcal{Y}}_{\theta}$ depend on the wave speed $v$, the secular equation $\det \boldsymbol{\mathcal{M}}_v=0$ can be formulated explicitly without requiring prior knowledge of the analytical expressions (as functions of the wave speed) for the eigenvalues satisfying \eqref{x9} and the associated eigenvector $d^{(k)}$. This bypasses the primary challenge encountered in most generalised  {continuum} models, except for specific cases where the task is relatively straightforward, such as classical isotropic linear elasticity \cite{Achenbach} or the theory of materials with voids \cite{NunziatoCowin79,ChiritaGhiba2,straughan2008stability}, provided more restrictive conditions are imposed on the constitutive coefficients.

\section{Numerical implementation}\label{Num1}\setcounter{equation}{0}

In what follows we will do the calculations for dense polyurethane  {Foam@0.18mm} with the constitutive parameters given in \cite{Lakes83,lakes1987foam}:
\begin{align}\label{numcoef1}
\mu_{\rm e}=104 \,\text{(MPa)},\qquad \mu_{\rm c}=4.3331  \,\text{(MPa)}, \qquad {\mu_{\rm e}\,L_{\rm c}^2}\,\alpha_1=79.9552 \,(\text{MPa}\cdot \text{mm}^2), \notag\vspace{2mm}\\ {\mu_{\rm e}\,L_{\rm c}^2}\,\alpha_2=10.6496 \,(\text{MPa}\cdot \text{mm}^2), \qquad {\mu_{\rm e}\,L_{\rm c}^2}\,\alpha_3= -53.3035 \,(\text{MPa}\cdot \text{mm}^2), \vspace{2mm}\\
J=j\,\mu_{\rm e}\,\tau_{\rm c}^2= 10 \,(\text{mm}^2),\qquad \rho_0=340\cdot 10^{-6} \,{(\text{g/mm}^3)},\notag
\end{align}
and we take the wave number $k=0.5 \,(\text{mm}^{-1}).$

\subsection{Numerical implementation for Love waves}\label{NumLove}

Imposing that the roots  of the characteristic equation \eqref{x9} not to be real, one can determine the speed $v$ by numerical calculation. We start form $0$ with a small step  {size} $10^{-12}$ and we find the first value of $v$ for which \eqref{x9} admits a real solution. This is the numerical approximation of the limiting speed. For the considered material the value of the limiting speed is approximated to be   $\widehat{v}\in (6.77866,6.77867)\,\text{m/s}$.   

To find the solution of the secular equation \eqref{secularEq} we compute $  \boldsymbol{\mathcal{M}}_v$ using \eqref{fmv} by interpolating over points in  $[0, \widehat{v})$. Since we have remarked that the solutions  {are} in a small  {neighbourhood} of the limiting speed,  {we have considered for interpolation of 100 intermediate points} which are in a very small neighbourhood of the limiting speed. In this way we find the approximate solution of the secular equation $v_0=1.52374204511`\,\text{m/s}.$

\begin{figure}[h!]
	\centering
	\includegraphics[width=7cm]{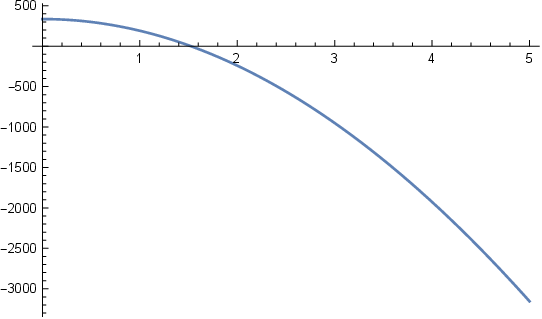}
	\put(4,94){\footnotesize $v$}
	\put(-215,125){\footnotesize $\det \boldsymbol{\mathcal{M}}_v$}
	\caption{\footnotesize A plot  $(v,\det  {  \boldsymbol{\mathcal{M}}}_v)$ obtained by evaluation for the given equidistant values for speed for dense polyurethane Foam  @0.18mm. The function $v\mapsto\det\boldsymbol{\mathcal{M}}_v$ is  a decreasing function of the wave speed $v$.}
	\label{fig:method}
\end{figure} 

Using the main theorem \ref{fmth}, we find the corresponding matrix $ \boldsymbol{\mathcal{M}}_{v_0}$ to be 
\begin{align}
		\boldsymbol{\mathcal{M}}_{v_0}=\left(
		\begin{array}{ccc}
			78.92996132702877 & 1.571343539151161 i & 1.971330465220131 \\
			-1.571343539151161 i & 8.968058122320638 & -9.11569303802329 i \\
			1.971330465220131 & 9.1156930380232 i & 9.267537853294720 \\
		\end{array}
		\right).
		\end{align}
 The matrix $\boldsymbol{\mathcal{E}}_{v_0}=\boldsymbol{\mathcal{X}}^{-1}(\boldsymbol{\mathcal{M}}_{v_0}+ {\rm i}\, \,\boldsymbol{\mathcal{Y}}^T)$ is then numerically approximated by
\begin{align}
		\boldsymbol{\mathcal{E}}_{v_0}=\left(
		\begin{array}{ccc}
			78.92996132702877 & 1.571343539151161 i & 1.971330465220131 \\
			-1.571343539151161 i & 8.968058122320638 & -9.11569303802329 i \\
			1.9713304652201319 & 9.11569303802329 i & 9.267537853294720 \\
		\end{array}
		\right).
		\end{align}
		\begin{figure}[h!]
		\centering 
		\begin{subfigure}{.31\textwidth}
			\includegraphics[width=\linewidth]{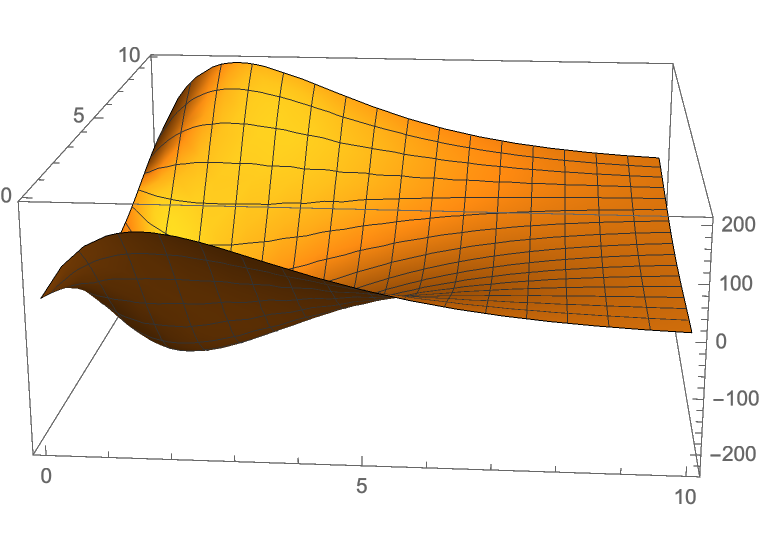}
			\put(-80,5){\footnotesize $x_2$}
			\put(-8,80){\footnotesize $x_1$}
			\put(-163,72){\footnotesize $u_3$}
			\caption{Plot of the $u_3$-component of the displacement.}
		\end{subfigure}\qquad\qquad\qquad 
		\begin{subfigure}{.31\textwidth}
			\includegraphics[width=\linewidth]{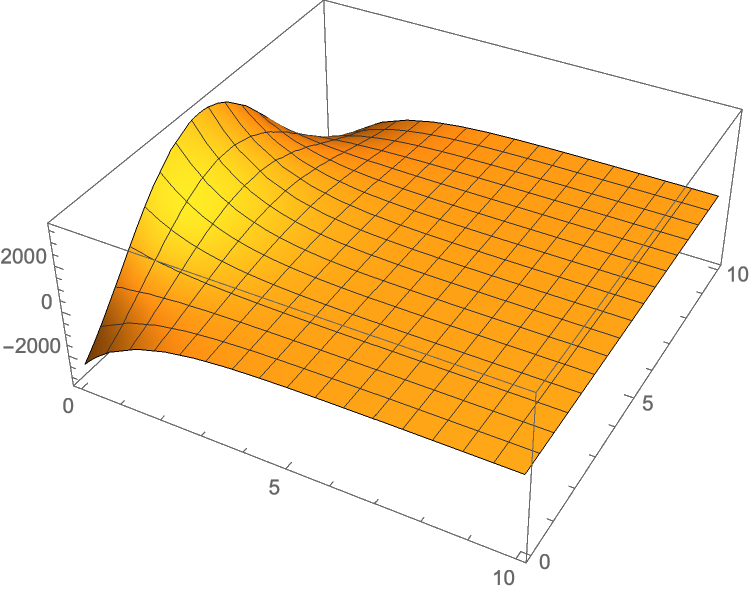}
				\put(-100,13){\footnotesize $x_2$}
			\put(-18,30){\footnotesize $x_1$}
			\put(-152,75){\footnotesize $\vartheta_1$}
			\caption{Plot of $\vartheta_1$-component of the micro-rotation vector.}
		\end{subfigure}\qquad \
		\begin{subfigure}{.31\textwidth}
			\includegraphics[width=\linewidth]{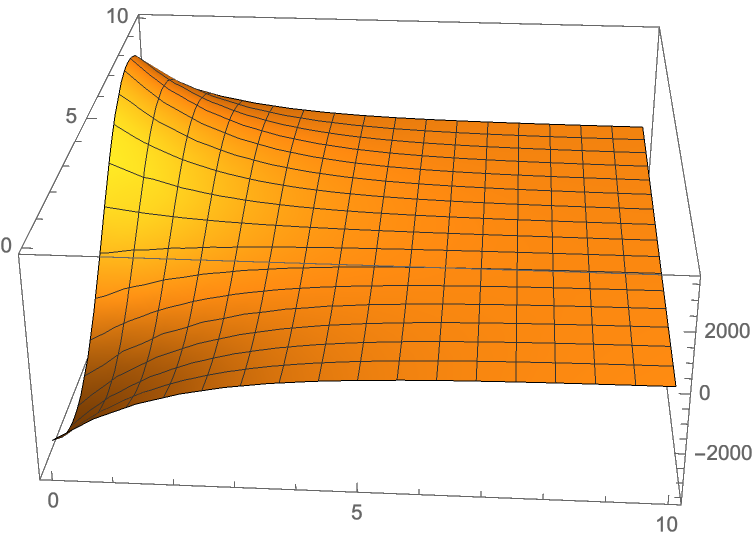}
				\put(-80,-2){\footnotesize $x_2$}
			\put(-9,80){\footnotesize $x_1$}
			\put(-163,60){\footnotesize $\vartheta_2$}
			\caption{Plot of $\vartheta_2$-component of the micro-rotation vector.}
		\end{subfigure}
		\caption{The plot of the solution at time $t=1$ and for the choice $	y_1=1\,$.}\label{u3theta1theta22}
\end{figure}

We determine $y(x_2)$ in the form given by (\ref{08}), i.e., a solution of the initial value problem (\ref{09}), after we find its approximate initial value
\begin{align}
			y(0)=\left(
			\begin{array}{c}
				y_1 \\
				-213.52544299420 i y_1 \\
				-210.23962949514 y_1 \\
			\end{array}
			\right), \qquad 	y_1\in \mathbb{C}.
\end{align}

After a substitution $y(x_2):=\widehat{\id}^{1/2}\,z(x_2)$ which lead us to $z(x_2)$ and then to the solution describing the Love waves propagation
\begin{align}\notag
 \mathcal{U}(x_1,x_2,t)=\begin{footnotesize}\begin{pmatrix}u_3(x_1,x_2,t)
 	\\
 	\vartheta_1(x_1,x_2,t)
 	\\\vartheta _2(x_1,x_2,t)
 	\end{pmatrix}\end{footnotesize}={\rm Re}\left[\begin{footnotesize}\begin{pmatrix} {\rm i}\,z_3(x_2)
 	\\
 	z_1(x_2)
 	\\z_2(x_2)
 \end{pmatrix}\end{footnotesize} e^{ {\rm i}\, k\, (  x_1-vt)}\right],
 \end{align}
of the problem given by \eqref{x6} and \eqref{x17}, see Figure \ref{u3theta1theta22}.

\subsection{Numerical implementation for Rayleigh waves}\label{NumRayleigh}

For comparison, we approximate the solution of the Rayleigh waves. The similar approach for the case of Rayleigh waves in isotropic elastic Cosserat materials can be found in \cite{khan2022existence}.

We find the the limiting speed is such that $\widehat{v}\in [6.78866,6.78868]\,\text{m/s}$, while  the approximate solution of the secular equation is 
$v_0=6.785867864122372\,\text{m/s}.$

We give the plot of  the approximate solution $\mathcal{U}=(u_1,u_2,\vartheta_3)$ for Rayleigh waves in Figure \ref{u1u2vartheta3}.

\begin{figure}[h!]
		\centering 
		\begin{subfigure}{.31\textwidth}
			\includegraphics[width=\linewidth]{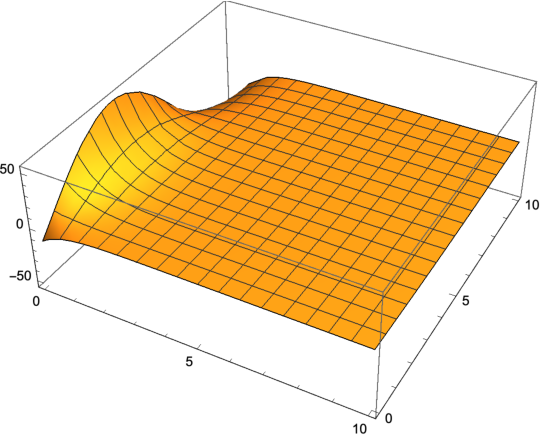}
				\put(-110,15){\footnotesize $x_2$}
			\put(-18,30){\footnotesize $x_1$}
			\put(-163,72){\footnotesize $u_1$}
			\caption{Plot of the $u_1$-component of the displacement.}
		\end{subfigure}\qquad\qquad\qquad 
		\begin{subfigure}{.31\textwidth}
			\includegraphics[width=\linewidth]{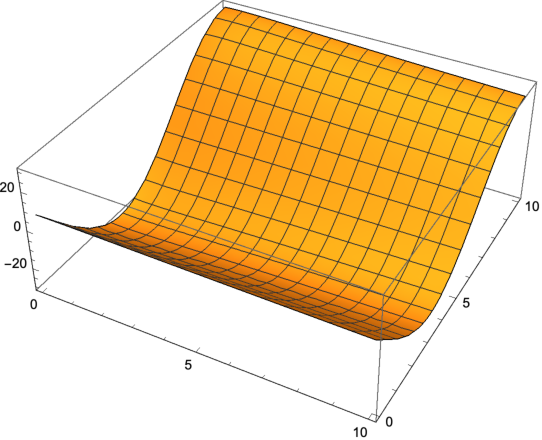}
				\put(-110,15){\footnotesize $x_2$}
			\put(-18,30){\footnotesize $x_1$}
			\put(-163,72){\footnotesize $u_2$}
			\caption{Plot of $u_2$-component of the micro-rotation vector.}
		\end{subfigure}\quad \
		\begin{subfigure}{.31\textwidth}
			\includegraphics[width=\linewidth]{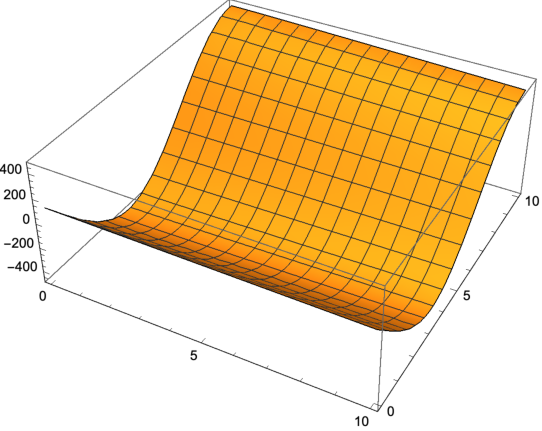}
				\put(-110,15){\footnotesize $x_2$}
			\put(-18,30){\footnotesize $x_1$}
			\put(-163,72){\footnotesize $\vartheta_3$}
			\caption{Plot of $\vartheta_3$-component of the micro-rotation vector.}
		\end{subfigure}
		\caption{The plot of the solution $\mathcal{U}=(u_1,u_2,\vartheta_3)$ in the case of Rayleigh waves at time $t=1$ and for the choice $	\varsigma={\rm i}\,$.}\label{u1u2vartheta3}
\end{figure}

In the end of this subsection we propose different values of $J$  and we calculate the speed of wave that gave the solution for the secular equation (\ref{x16}) for Love waves and also for Rayleigh wave, see Tables \ref{tab1} and \ref{tab2}.

\begin{table}[h!]
	\begin{center}
		\begin{tabular}{|c|c|c|c|}
			\hline
			    & J=0.1               & J=1              & J=10           \\ \hline
			speed of Love waves & 14.0123177251145498 & 4.79994344526609 & 1.523742045114 \\ \hline
		speed of Rayleigh waves & 17.591996478068255 & 17.31368477726568 & 6.78586786412209395 \\ \hline
		\end{tabular}
		\caption{The values of solution of the secular equation for different $J$ when $k=0.5$ in the case of Love and Rayleigh waves.}\label{tab1}
	\end{center}
\end{table}

\begin{table}[h!]
\begin{center}
\begin{tabular}{|c|c|c|c|}
\hline
  & k=0.5            & k=5                \\ \hline
speed of Love waves  & 4.79994344526609 & 16.352013950835141    \\ \hline
	speed of Rayleigh waves & 17.31368477726568 & 16.37076505452122395   \\ \hline
\end{tabular}
\caption{The values of solution of the secular equation for different $k$ when $J=1$ for Love and Rayleigh waves. }
\label{tab2}\end{center}
\end{table}

\section{From linear Cosserat theory to classical linear elasticity: a consistency check and comparison of the results}\label{Classic}\setcounter{equation}{0}

In this section we rediscover the results from classical linear elasticity as a limit case of the results obtained in the linear Cosserat theory. First, we observe that if $\mu_c\to 0$, then 
the total energy \eqref{energy} is decoupled into the energy due to the macro-displacement $u$
 \begin{align}
	W^{\rm macro}(u ):=&\,\rho\,\|u_{,t}\|^2+ \mu_{\rm e} \,\lVert \dev_3\, \sym\,\mathrm{D}u\rVert ^{2}+\frac{2\,\mu_{\rm e} +3\,\lambda_{\rm e} }{6}\left[\mathrm{tr} \left(\mathrm{D}u\right)\right]^{2},
\end{align}
and 
the energy due to the micro-rotation $\vartheta$ 
\begin{align}&
		W^{\rm micro}(\vartheta ):=\,\rho\,j\,\mu_{\rm e}\,\tau_{\rm c}^2\,\|\vartheta_{,t}\|^2+\frac{\mu_{\rm e}L_{\rm c}^2}{2}\left[{a_1 } \|\dev\,\sym \,\mathrm{D} \vartheta)\|^2+{a_2}\|\skw \,\mathrm{D}\vartheta\|^2+ \frac{4\,a_3}{3}\,[\tr(\mathrm{D} \vartheta)]^2\right].
\end{align}

Then, we have that the total energy   remains finite for
$
	L_{\rm c}\to \infty,
$
 if 
$ \|\mathrm{D}\vartheta\|^2=0$ 
		 which further implies that $ 
\vartheta=\textrm{constant}$  \textrm{in} $\Omega.$
This situation corresponds to a time dependent rigid (macroscopic) movement of the entire body. In addition, under Dirichlet homogeneous boundary conditions on $ \vartheta$ we find that $ \vartheta$ vanishes in the entire body at any time.

Moreover, assuming also that 
$ \tau_{\rm c}\to 0,$ too, the inertia term due to the micro-rotation vanishes. 

Therefore, all these limit cases together  lead us   back in the framework of classical linear elasticity, i.e., the  elastic energy density is
\begin{align}
	W_{\rm classic}(\mathrm{D}u )=&\,  \mu_{\rm e} \,\lVert \dev_3\, \sym\,\mathrm{D}u \rVert ^{2}+\frac{2\,\mu_{\rm e} +3\,\lambda_{\rm e} }{6}\left[\mathrm{tr} \left(\mathrm{D}u \right)\right]^{2},\qquad \qquad \qquad \qquad \qquad 
\end{align}
while the equations of motion are the Lam\'e equations
\begin{align}\label{PDE1}
	\rho\,\frac{\partial^2 \,u_i}{\partial\, t^2}&=\underbrace{\mu_{\rm e} \dd\sum_{l=1}^3\frac{\partial^2 u_i }{\partial x_l^2}+(\mu_{\rm e} +\lambda_{\rm e} )\dd\sum_{l=1}^3\frac{\partial^2 u_l }{\partial x_l\partial x_i}}_{={\rm Div}\boldsymbol{\sigma}_{\rm classic}},
\end{align}
and the decoupled equations of micro-rotations
\begin{align}\label{PDE2}
		\rho\, j\,\mu_{\rm e}\,\tau_{\rm c}^2\,\frac{\partial^2 \,\vartheta_i}{\partial\, t^2}&=\underbrace{{\mu_{\rm e}L_{\rm c}^2}\,\left[(\alpha_1+\alpha_2)\dd\sum_{l=1}^3\frac{\partial^2 \vartheta _i }{\partial x_l^2}+(\alpha_1-\alpha_2+\alpha_3)\dd\sum_{l=1}^3\frac{\partial^2 \vartheta_l }{\partial x_l\partial x_i}\right]}_{={\rm Div}\mathbf{ m}}, \notag
	\end{align}
	with $i=1,2,3$ and
\begin{align}
	\boldsymbol{\sigma}_{\rm classic}&:=2\,\mu_{\rm e}\,\sym \,{\rm D}u \, +\lambda_{\rm e} \,\tr (\sym \,{\rm D}u)\, \id,\\
		\boldsymbol{m}_{\rm microrotation}&:={\mu_{\rm e}\,L_{\rm c}^2}\,\left[2\,{\alpha_1}\, \sym\,{\rm D}\vartheta +2\,{\alpha_2}\,\skw\, {\rm D}\vartheta+\alpha_3\,\tr({\rm D}\vartheta)\,\id\right].\notag
\end{align}

Letting $\mu_{\rm c}\to 0$, the matrices involved in the Riccati equation become
\begin{align}\label{Xc}
	\boldsymbol{\mathcal{X}}:=&\begin{footnotesize}\left(
		\begin{array}{ccc}
			\frac{k^2 \mu _e}{\rho } & 0 & 0 \\
			0 & \frac{k^2L_c^2 \left(\alpha _1  +\alpha _2  \right)}{\rho  \tau _c^2 j} & 0 \\
			0 & 0 & \frac{k^2L_c^2 \left(2 \alpha _1 +\alpha _3  \right)}{\rho  \tau _c^2 j} \\
		\end{array}
		\right)\end{footnotesize}, \qquad \qquad 
	\boldsymbol{\mathcal{Y}}:= \begin{footnotesize}\left(
		\begin{array}{ccc}
			0 & 0 & 0 \\
			0 & 0 & \frac{\alpha _3 L_c^2 k  }{\rho  \tau _c^2 j} \\
			0 & \frac{kL_c^2 \left(\alpha _1  -\alpha _2 \right)}{\rho  \tau _c^2 j} & 0 \\
		\end{array}
		\right)\end{footnotesize},\notag\\
	\widetilde{	\boldsymbol{\mathcal{Z}}}:=&\begin{footnotesize}\left(
		\begin{array}{ccc}
			\frac{k^2 \mu _e}{\rho }-k^2 v^2 & 0 & 0 \\
			0 & \frac{k^2 L_c^2\left(2 \alpha _1  +\alpha _3 \right)}{\rho  \tau _c^2 j}-k^2 v^2 & 0 \\
			0 & 0 & \frac{k^2L_c^2 \left(\alpha _1  +\alpha _2 \right)}{\rho  \tau _c^2 j}-k^2 v^2 \\
		\end{array}
		\right)\end{footnotesize}.
\end{align}

Thus, we have
\begin{align}
	\boldsymbol{\mathcal{X}}_{\!\! \theta}&=
	\begin{footnotesize}\left(
		\begin{array}{ccc}
			\frac{k^2 \left(\mu _e-\rho  v^2 \sin ^2(\theta )\right)}{\rho } & 0^T \\
			0 &  \boldsymbol{\mathcal{A}}_\theta 
		\end{array}
		\right)\end{footnotesize},\qquad \qquad
	\boldsymbol{\mathcal{Y}}_\theta=
	\begin{footnotesize}\left(
		\begin{array}{ccc}
			k^2 v^2 \sin (\theta ) \cos (\theta ) & 0^T \\
			0 & \boldsymbol{\mathcal{B}}_\theta 
		\end{array}
		\right)\end{footnotesize},
\end{align}
where
\begin{align}
	\boldsymbol{\mathcal{A}}_\theta&=
	\begin{tiny}\left(
		\begin{array}{ccc}
			k^2 \left(\alpha _2 c^2 \cos ^2(\theta )-\frac{1}{2} \alpha _1 c^2 (\cos (2 \theta )-3)+\sin ^2(\theta ) \left(\alpha _3 c^2-v^2\right)\right) & \left(\alpha _1-\alpha _2+\alpha _3\right) \left(-c^2\right) k \sin (\theta ) \cos (\theta ) \\
			\left(\alpha _1-\alpha _2+\alpha _3\right) \left(-c^2\right) k \sin (\theta ) \cos (\theta ) & \frac{1}{2} k^2 \left(c^2 \left(\alpha _1 (\cos (2 \theta )+3)+2 \left(\alpha _2 \sin ^2(\theta )+\alpha _3 \cos ^2(\theta )\right)\right)-2 v^2 \sin ^2(\theta )\right) \\
		\end{array}
		\right)\end{tiny}\\
	\boldsymbol{\mathcal{B}}_\theta&=
	\begin{tiny}\left(
		\begin{array}{ccc}
			k^2 \sin (\theta ) \cos (\theta ) \left(v^2-\left(\alpha _1-\alpha _2+\alpha _3\right) c^2\right) & c^2 k \left(\left(\alpha _2-\alpha _1\right) \sin ^2(\theta )+\alpha _3 \cos ^2(\theta )\right) \\
			c^2 k \left(\left(\alpha _1-\alpha _2\right) \cos ^2(\theta )-\alpha _3 \sin ^2(\theta )\right) & k^2 \sin (\theta ) \cos (\theta ) \left(\left(\alpha _1-\alpha _2+\alpha _3\right) c^2+v^2\right) \\
		\end{array}
		\right)\end{tiny},\notag
\end{align}
with the notation $c=\frac{ L_c^2}{\rho  \tau _c^2 j}$.

If we impose the boundary condition
\begin{align}\label{03}
	\boldsymbol{\sigma}_{\rm classic}.\, n=0, \qquad \qquad\textrm{for}\quad x_2=0,
\end{align}
and the following   decay condition
\begin{align}\label{04}
	\lim_{x_2 \rightarrow \infty}\{u_3,\sigma_{13}^{\rm classic},\sigma_{23}^{\rm classic}\} (x_1,x_2,t)=0 \qquad \quad \forall\, x_1\in\mathbb{R}, \quad \forall \, t\in[0,\infty),
\end{align}
by  following the usual path from classical elasticity, i.e., by looking for a solution \begin{align}\label{cx5}
	\mathcal{U}_{\rm classic}(x_1,x_2,t)=\begin{footnotesize}\begin{pmatrix}u_1(x_1,x_2,t)
			\\
			u_2(x_1,x_2,t)
			\\u_3(x_1,x_2,t)
	\end{pmatrix}\end{footnotesize}=\begin{footnotesize}\begin{pmatrix}0
	\\
	0
	\\d_3^{\rm classic}(x_1,x_2,t)
\end{pmatrix}\end{footnotesize}={\rm Re}\left[\begin{footnotesize}\begin{pmatrix} 0
			\\
		0
			\\ d_3^{\rm classic}
	\end{pmatrix}\end{footnotesize} \,e^{{\rm i}\,r\,k\,x_2}\,e^{ {\rm i}\, k\, (  x_1-vt)}\right], \quad \text{Im}\,r>0,
\end{align}
of \eqref{PDE1}, 
it is easy to see \cite{Achenbach} that  
\begin{align}
	r=\sqrt{1-\left(\frac{v}{c_T}\right)^2}, \qquad \text{where}\quad c_T=\sqrt{\frac{\mu_{\rm e}}{\rho}}.
\end{align}
Moreover, since from the free surface boundary  conditions on $x_2=0$ we have
\begin{align}
	\frac{\partial u_3}{\partial x_2}\Big|_{x_2=0}=0,
\end{align}
we find that either $d_3^{\rm classic}=0$ or $r=0$, neither case representing a surface wave, see \cite[page 219]{Achenbach}. In conclusion, using the classical linear theory of isotropic  elastic materials it is not possible to analytically find  that surface waves with displacements perpendicular to the plane of propagation araise \cite{Achenbach}. However, experimental data shows that these kind of waves may occur along a free surface. A compromise has been introduced in classical elasticity, see \cite[page 219]{Achenbach}, by considering that the half space is covered by a thin layer of a different material. For this new problem it is proved that Love-type waves orise \cite[page 219]{Achenbach}. However, there is no need to consider layers in order to explain the existence of Love waves from experiments, since the presence of the Love waves could be explained theoretical by the presence of micro-rotations, in the sense that even in the uncoupled case ($\mu_{\rm c}=0$) the displacement $u_3$ may not vanish for a speed of propagation produced by the microstructure. We will explain this in the rest of this section.

We may argue that $c_T$ is not an admissible speed of propagation in the sense of our approach, i.e., by showing that it is bigger than the limiting speed. Now, 	by the {limiting speed} we understand a speed $\widehat{v}>0$, such that for all wave speeds satisfying 	$0\le v<\widehat{v}$  (subsonic speeds) the roots  of the characteristic equation \eqref{x9} corresponding to the matrices given by \eqref{Xc} are not real and vice versa, i.e., if the roots  of the characteristic equation \eqref{Xc}  are not real then they correspond to wave speeds $v$ satisfying 	$0\le v<\widehat{v}$.

Indeed, similar arguments as those used for the proof of  Proposition \ref{proprealw2} and Proposition \ref{lemmaGH} lead us to
\begin{proposition}\label{proprealw2c} Let   ${\xi}\in \mathbb{R}^3$, $\xi\neq 0$ be any direction of the form $\xi=(\xi_1,\xi_2, 0)^T$. The necessary and sufficient conditions for existence of   a non trivial solution of the system of partial differential equations \eqref{PDE1}, \eqref{PDE2}  of the form given by \eqref{rwr} are 
	\begin{align}\label{d12f}
		2\alpha_1+\alpha_3 >0,\qquad \qquad \mu_{\rm e} >0,\qquad \qquad   \alpha_1+\alpha_2>0.
	\end{align}
\end{proposition}
Let  {us} assume that  the constitutive coefficients satisfy the conditions 	\eqref{d12}. Then, there	exists a limiting speed $\widehat{v}>0$. Furthermore,   if one root   $r_v$ of the characteristic equation \eqref{x9} coresponding to the matrices given by \eqref{Xc} is real then it corresponds to a speed $v\geq \widehat{v}$ (non-admissible).
	Moreover,  for all $\theta\in \left(-\frac{\pi}{2},\frac{\pi}{2}\right)$, $k>0$ and $0\le v<\widehat{v}$, the tensor $\widetilde{\boldsymbol{\mathcal{Z}}}_\theta:=\sin^2\theta\,\boldsymbol{\mathcal{X}}+\sin\theta\cos\theta (\boldsymbol{\mathcal{Y}}+\boldsymbol{\mathcal{Y}}^T)+\cos^2\theta\,\boldsymbol{\mathcal{Z}}-k^2 v^2 \cos^2\theta \,{\id}$ is positive definite. Since $ \boldsymbol{\mathcal{X}}_{\!\! \theta}(\theta+\frac{\pi}{2})=\widetilde{\boldsymbol{\mathcal{Z}}}_\theta$, we find that for all $\theta\in \left(-\frac{\pi}{2},\frac{\pi}{2}\right)$, $k>0$ and $0\le v<\widehat{v}$, the tensor ${\boldsymbol{\mathcal{X}}}_\theta$ must be positive definite.   Therefore, we must have
	\begin{align}
		\frac{k^2 \left(\mu _e-\rho  v^2 \sin ^2(\theta )\right)}{\rho }>0\qquad \forall \ \theta\in \left(-\frac{\pi}{2},\frac{\pi}{2}\right), \quad \forall\  k>0,\quad \forall \ 0\le v<\widehat{v},
	\end{align}
	which implies that
	\begin{align}
		v<c_T=\sqrt{\frac{\mu_{\rm e}}{\rho}}\qquad \forall \ 0\le v<\widehat{v},
	\end{align}
	i.e., it implies that 	$\widehat{v}\leq c_T$ and that an admissible speed of propagation may never reach the value $c_T$.

Consequently, from \eqref{fmv} we deduce that the matrix $\boldsymbol{\mathcal{M}}$ giving the secular equation
$
	\det \boldsymbol{\mathcal{M}}=0
$
is
\begin{align}
	\boldsymbol{\mathcal{M}}&=
	\begin{footnotesize}\left(
		\begin{array}{ccc}
		 \Big[\dd\int_{0}^{\pi}	 	\Big(\frac{k^2 \left(\mu _e-\rho  v^2 \sin ^2(\theta )\right)}{\rho }\Big) ^{-1}\, d\theta\Big]^{-1}\Big[\pi\id-  {\rm i}\, \int_{0}^{\pi}    {	\Big(\frac{k^2 \left(\mu _e-\rho  v^2 \sin ^2(\theta )\right)}{\rho }\Big)} ^{-1}   k^2 v^2 \sin (\theta ) \cos (\theta )\, d\theta\Big]& 0^T \\
			0 &  \boldsymbol{\mathcal{N}} 
		\end{array}
		\right)\end{footnotesize},
\end{align}
where
	\begin{align}\label{fmv0}
	\boldsymbol{\mathcal{N}}=\Big(\int_{0}^{\pi}	  \boldsymbol {\mathcal{A}}_  { \theta} ^{-1}\, d\theta\Big)^{-1}\Big(\pi\id-  {\rm i}\, \int_{0}^{\pi}   \boldsymbol{\mathcal{A}}_  { \theta} ^{-1}   {   \boldsymbol{\mathcal{B}}}_\theta  ^T\, d\theta\Big),
\end{align}

We have
\begin{align}\label{notz}
\Big[\dd\int_{0}^{\pi}	 	\Big(\frac{k^2 \left(\mu _e-\rho  v^2 \sin ^2(\theta )\right)}{\rho }&\Big) ^{-1}\, d\theta\Big]^{-1}\Big[\pi\id-  {\rm i}\, \int_{0}^{\pi}    {	\Big(\frac{k^2 \left(\mu _e-\rho  v^2 \sin ^2(\theta )\right)}{\rho }\Big)} ^{-1}   k^2 v^2 \sin (\theta ) \cos (\theta )\, d\theta\Big]\\ &=	\frac{2 i k^2 \sqrt{\mu _e} \sqrt{\rho  v^2-\mu _e}}{\rho  \left(2 \mu _e-\rho  v^2+2 \pi \right)}\neq 0.\notag
\end{align}
Therefore, since we already know that for $\mu_{\rm c}\to 0$ (from the same arguments as for the case $\mu_{\rm c}>0$) there is a unique solution of the secular equation $\det \boldsymbol{\mathcal{N}}$, using \eqref{notz} we have that the secular equation reduces to 
\begin{align}
	\det \boldsymbol{\mathcal{N}}=0,
	\end{align}
	where  $\boldsymbol{\mathcal{N}}$ is given by \eqref{fmv0} and that there is a unique solution of this equation which satisfies $0\leq v<\widehat{v}$. In particular the solution $v$ is not $c_T$, which implies that $u_3$ does not vanish, i.e., that we have a theoretical meaning of the existence of the Love waves in experiments, without considering additional material layers as it is done before \cite{Achenbach}.  {This confirms and proofs rigorously  the remark made in  \cite{kulesh2009problem,Kulesh1,Kulesh2} about the existence of a qualitatively new wave mode   for the  Cosserat elastic model.}  The meaning is that the non-zero macro-displacement $u_3$ is produced by a speed of propagation which is due to the given microstructure.
		
The exact form of $\boldsymbol{\mathcal{N}}$  is
\begin{align}
\!\!\begin{tiny}\boldsymbol{\mathcal{N}}\!=\!\!	\left(\!\!\!
	\begin{array}{cc}
		\frac{c v^2}{c^2 \sqrt{\frac{2 \alpha _1+\alpha _3}{\left(2 \alpha _1+\alpha _3\right) c^2-v^2}}-\sqrt{c^2-\frac{v^2}{\alpha _1+\alpha _2}}} & -\frac{i c^2 \sqrt{\frac{2 \alpha _1+\alpha _3}{\left(2 \alpha _1+\alpha _3\right) c^2-v^2}} 2 \alpha _1\left( c^2- \sqrt{\frac{\left(c^2-\frac{v^2}{\alpha _1+\alpha _2}\right) \left(\left(2 \alpha _1+\alpha _3\right) c^2-v^2\right)}{2 \alpha _1+\alpha _3}}-v^2\right)}{c^2 \sqrt{\frac{2 \alpha _1+\alpha _3}{\left(2 \alpha _1+\alpha _3\right) c^2-v^2}}-\sqrt{c^2-\frac{v^2}{\alpha _1+\alpha _2}}} \vspace{2mm}\\
		\frac{i c^2 \left(-v^2 \sqrt{\frac{\alpha _1+\alpha _2}{\left(\alpha _1+\alpha _2\right) c^2-v^2}}-2 \alpha _1 \left(\sqrt{c^2-\frac{v^2}{2 \alpha _1+\alpha _3}}-c^2 \sqrt{\frac{\alpha _1+\alpha _2}{\left(\alpha _1+\alpha _2\right) c^2-v^2}}\right)\right)}{c^2 \sqrt{\frac{\alpha _1+\alpha _2}{\left(\alpha _1+\alpha _2\right) c^2-v^2}}-\sqrt{c^2-\frac{v^2}{2 \alpha _1+\alpha _3}}} & \frac{c v^2}{c^2 \sqrt{\frac{\alpha _1+\alpha _2}{\left(\alpha _1+\alpha _2\right) c^2-v^2}}-\sqrt{c^2-\frac{v^2}{2 \alpha _1+\alpha _3}}} \\
	\end{array}
	\!\!\!\right)\!\!.\end{tiny}.
\end{align}
Let us remark that since for all $v\in[0,\widehat{v})$ the matrix $ \boldsymbol {\mathcal{A}}_  { \theta}$ is also positive definite $\forall \theta\in [0,\pi]$,  then $\int_{0}^{\pi}	  \boldsymbol {\mathcal{A}}_  { \theta} ^{-1}\, d\theta$ is positive definite. Since 
\begin{align}
\int_{0}^{\pi}	  \boldsymbol {\mathcal{A}}_  { \theta} ^{-1}\, d\theta=\left(
\begin{array}{cc}
	\frac{\pi  \left(c^2 \sqrt{\frac{2 \alpha _1+\alpha _3}{\left(2 \alpha _1+\alpha _3\right) c^2-v^2}}-\sqrt{c^2-\frac{v^2}{\alpha _1+\alpha _2}}\right)}{c v^2} & 0 \\
	0 & \frac{\pi  c^2 \sqrt{\frac{\alpha _1+\alpha _2}{\left(\alpha _1+\alpha _2\right) c^2-v^2}}-\pi  \sqrt{c^2-\frac{v^2}{2 \alpha _1+\alpha _3}}}{c v^2} \\
\end{array}
\right)\in \mathbb{R}^{2\times 2}
	\end{align}
it follows that for all $v\in[0,\widehat{v})$ we must have
\begin{align}\label{limp}
	\left(2 \alpha _1+\alpha _3\right) c^2-v^2>0,\   \left(\alpha _1+\alpha _2\right) c^2-v^2>0 \ \text{and} \ c^2 \sqrt{\frac{2 \alpha _1+\alpha _3}{\left(2 \alpha _1+\alpha _3\right) c^2-v^2}} \sqrt{\frac{\alpha _1+\alpha _2}{\left(\alpha _1+\alpha _2\right) c^2-v^2}}>1.
\end{align}
The third inequality is redundant, while if \eqref{limp}$_{1,2}$ are satisfied, then $\boldsymbol{\mathcal{N}}$ is an hermitian matrix, while the secular equation
is given by
$
	f_c(v)=0,
$
with $f_c:[0,\widehat{v})\to \mathbb{R}$,
\begin{footnotesize}
\begin{align}
	f_c(v):=&\begin{footnotesize}\frac{1}{\left(c^2 \sqrt{\frac{\alpha _1+\alpha _2}{\left(\alpha _1+\alpha _2\right) c^2-v^2}}-\sqrt{c^2-\frac{v^2}{2 \alpha _1+\alpha _3}}\right) \left(\sqrt{c^2-\frac{v^2}{\alpha _1+\alpha _2}}-c^2 \sqrt{\frac{2 \alpha _1+\alpha _3}{\left(2 \alpha _1+\alpha _3\right) c^2-v^2}}\right)}\end{footnotesize}\notag\\
	& \begin{tiny}\times \Bigg\{c^2 v^4 \left(c^2 \sqrt{\frac{\alpha _1+\alpha _2}{\left(\alpha _1+\alpha _2\right) c^2-v^2}} \sqrt{\frac{2 \alpha _1+\alpha _3}{\left(2 \alpha _1+\alpha _3\right) c^2-v^2}}-1\right)\end{tiny}\notag\\&\qquad \begin{tiny}+2 \alpha _1 c^4 \Big(v^2 \left(\sqrt{\frac{\alpha _1+\alpha _2}{\left(\alpha _1+\alpha _2\right) c^2-v^2}} \left(\sqrt{\frac{\left(\left(\alpha _1+\alpha _2\right) c^2-v^2\right) \left(\left(2 \alpha _1+\alpha _3\right) c^2-v^2\right)}{\left(\alpha _1+\alpha _2\right) \left(2 \alpha _1+\alpha _3\right)}}-2 c^2\right) +\sqrt{c^2-\frac{v^2}{2 \alpha _1+\alpha _3}}\right)\end{tiny}\notag\\
	&\qquad \begin{tiny}-2 \alpha _1 \left(c^2 \sqrt{\frac{\alpha _1+\alpha _2}{\left(\alpha _1+\alpha _2\right) c^2-v^2}}-\sqrt{c^2-\frac{v^2}{2 \alpha _1+\alpha _3}}\right) \left(\sqrt{\frac{\left(\left(\alpha _1+\alpha _2\right) c^2-v^2\right) \left(c^2-\frac{v^2}{2 \alpha _1+\alpha _3}\right)}{\alpha _1+\alpha _2}}-c^2\right)\Big) \end{tiny}\\&\qquad \begin{tiny}\times \sqrt{\frac{2 \alpha _1+\alpha _3}{\left(2 \alpha _1+\alpha _3\right) c^2-v^2}}\Bigg\}\end{tiny}.\notag
\end{align}
\end{footnotesize}
Let us remark that for the assumptions from the beginning of this section, i.e., $ L_{\rm c}\to \infty$ and $ \tau_{\rm c}\to 0$ it is natural to consider the case $c\to \infty$, the expectation being that is this situation we must lead to the same result as in the classical elasticity. Indeed, we already know that there is no admissible speed due to the macrostructure, but we also have  that assuming \eqref{limp}, we have  that
 $\lim_{c\to \infty} f_c(v)=\infty$, $\forall v\in[0,\widehat{v})$. This means that in  this case there does not exist a solution of the secular equation due to the microstructure. Therefore, for Love waves propagation $\mu_{\rm c}\to 0$, $ L_{\rm c}\to \infty$ and $ \tau_{\rm c}\to 0$  means the framework of classical elasticity. 

\section{Conclusion}

This paper, together with \cite{ghiba2023cosserat}, makes possible a complete comparison between the propagation of two principal types of surface waves, Love waves and Rayleigh waves, in Cosserat homogeneous isotropic materials. From experiments we know that, in general, they exhibit fundamentally distinct modes of motion. Love waves are characterised by purely horizontal, transverse displacements perpendicular to the direction of wave propagation, with no vertical component. In contrast, Rayleigh waves induce retrograde elliptical particle motion in the vertical plane, combining both vertical and longitudinal displacements along the direction of propagation. In the standard half-space model used to describe these waveforms, the boundary is assumed to be traction-free, and all non-zero displacement and stress components are required to decay asymptotically to zero with increasing depth. The method and the algorithms considered in \cite{ghiba2023cosserat} and the present paper are easy to be implemented and mathematically fully justified, due to the previous results given by Fu and Mielke \cite{fu2001nonlinear,mielke2004uniqueness}. 

It is known that the experimental data shows that Love waves exists. It is also known that in the framework of  classical linear theory of isotropic  elastic materials it is not possible to find analytically Love waves-type solutions \cite{Achenbach}. To avoid this fact,  a compromise has been introduced in classical elasticity, see \cite[page 219]{Achenbach}, by considering that the half space is covered by a layer of a different material. In this paper, we give a complete mathematical study which shows that the Cosserat theory is able to model the Love waves propagation without  any artificial introduction of surface layers.

A complete mathematical study of the propagation of Love waves in Cosserat elastic materials which includes a proof of the existence and uniqueness of an admissible solution of the secular equation was missing until our study.
The present study confirms the effectiveness of Fu and Mielke's general method in establishing the existence and uniqueness of a subsonic solution that describes the propagation of Love waves in an isotropic half-space (without additional material surface layers), based on the linear theory of isotropic elastic Cosserat materials.  The given formulas and the given algorithm admit numerical implementation and we use them for the study of Love waves propagation in  dense polyurethane  {Foam@0.18mm}  \cite{Lakes83,lakes1987foam}. We remark that once in the measurement of Love waves and Rayleigh waves propagation on the Earth surface the Love waves propagate faster than the Rayleigh waves, for our material modelled in the framework of the Cosserat linear theory we obtain the opposite. For other materials modelled by the Cosserat linear theory, numerical simulations show that there is no general rule about which of those type of seismic waves travel faster, this 
property depending on the material parameters  {and needs further research}. We have also seen that for Love waves propagation the case $\mu_{\rm c}\to 0$, $ L_{\rm c}\to \infty$ and $ \tau_{\rm c}\to 0$  means the framework of classical elasticity, while, in general, for $\mu_{\rm c}\to 0$ the presence of the Love waves at macroscopic level could be an effect of the microscopic level.
\bigskip

\begin{footnotesize}
	\noindent{\bf Acknowledgements:} The  work of I.D. Ghiba  was supported by a grant of the Romanian Academy,  within GAR2023, project code 73: {\it Research grant made with financial support from the Donors' Recurrent Fund, available to the Romanian Academy and managed through the "PATRIMONIU" Foundation GAR2023}.

	\bibliographystyle{plain} 

\addcontentsline{toc}{section}{References}

\appendix

\section{Notation}\label{NS}
In the following, we recall some useful notations for the present work. For $ a,b\in \mathbb{R}^{3 \times 3} $ we let $\langle{a,b}\rangle_{\mathbb{R}^3}$ denote the scalar product on $ \mathbb{R}^3$ with associated vector norm $\norm{a}^2=\langle {a,a}\rangle$. We denote by $\mathbb{R}^{3\times 3}$   the set of real $3 \times 3$ second order tensors, written with capital letters. Matrices will be denoted by bold symbols, e.g. $\mathbf{X}\in \mathbb{R}^{3\times 3}$, while $X_{ij}$ will denote its component. The standard Euclidean product on $\mathbb{R}^{3 \times 3}$ is given by $\langle{ \mathbf{X},\mathbf{Y}}\rangle_{\mathbb{R}^{3 \times 3}}=\tr( \mathbf{X}\,\mathbf{Y}^T)$, and thus, the Frobenious tensor norm is $\norm{ \mathbf{X}}^2=\langle{ \mathbf{X}, \mathbf{X}}\rangle_{\mathbb{R}^{3 \times 3}}$. In the following we omit the index $\mathbb{R}^3, \mathbb{R}^{3\times 3}$. The identity tensor on $\mathbb{R}^{3\times 3}$ will be denoted by $\id$, so that $\tr( \mathbf{X})=\langle{ \mathbf{X},\id}\rangle$. We let $\Sym$  denote the set of symmetric tensors. We adopt the usual abbreviations of  Lie-algebra theory, i.e., $\mathfrak{so}(3):=\{ \mathbf{A}\,\in \mathbb{R}^{3\times 3}| \mathbf{A}^T=- \mathbf{A}\}$ is the Lie-algebra of skew-symmetric tensors and $\mathfrak{sl}(3):=\{ \mathbf{X}\, \in \mathbb{R}^{3\times 3} |\tr( \mathbf{X})=0 \}$  is the Lie-algebra of traceless tensors. For all $ \mathbf{X} \in\mathbb{R}^{3\times 3}$  we set $\sym\,  \mathbf{X}=\frac{1}{2}( \mathbf{X}^T+ \mathbf{X}) \in\Sym,\,\skw  \mathbf{X}=\frac{1}{2}( \mathbf{X}- \mathbf{X}^T)\in \mathfrak{so}(3)$ and the deviatoric (trace-free) part $\dev\, \mathbf{X}= \mathbf{X}-\frac{1}{3}\tr( \mathbf{X})\in\, \mathfrak{sl}(3)$ and we have the orthogonal Cartan-decomposition of the Lie-algebra $
\mathfrak{gl}(3)=\{\mathfrak{sl}(3)\cap \Sym(3)\}\oplus\mathfrak{so}(3)\oplus\mathbb{R}\cdot\id,\ 
\mathbf{X}= \dev\,\sym\, \mathbf{X}+\skw\, \mathbf{X}+\frac{1}{3}\tr( \mathbf{X})\,\id.\
$
We use the canonical identification of $\mathbb{R}^3$ with $\so(3)$, and, for
\begin{align}
	\mathbf{A}=	\begin{footnotesize}\begin{pmatrix}
			0 &-a_3&a_2\\
			a_3&0& -a_1\\
			-a_2& a_1&0
	\end{pmatrix}\end{footnotesize}\in \so(3)
\end{align}
we consider the operators $\axl\,:\so(3)\rightarrow\mathbb{R}^3$ and $\anti:\mathbb{R}^3\rightarrow \so(3)$ through
\begin{align}
	\axl\,( \mathbf{A}):&=\left(
	a_1,
	a_2,
	a_3
	\right)^T,\quad \quad  \mathbf{A}.\, v=(\axl\,  \mathbf{A})\times v, \qquad \qquad (\anti(v))_{ij}=-\epsilon_{ijk}\,v_k, \quad \quad \forall \, v\in\mathbb{R}^3,
	\notag \\(\axl\,  \mathbf{A})_k&=-\frac{1}{2}\, \epsilon_{ijk} \mathbf{A}_{ij}=\frac{1}{2}\,\epsilon_{kij} {A}_{ji}\,, \quad  {A}_{ij}=-\epsilon_{ijk}\,(\axl\,  \mathbf{A})_k=:\anti(\axl\,  \mathbf{A})_{ij},
\end{align}
where $\epsilon_{ijk}$ is the totally antisymmetric third order permutation tensor.

For a  regular enough function $f(t,x_1,x_2,x_3)$,  $f_{,t}$ denotes the derivative with respect to the time $t$, while  $ \frac{\partial\, f}{\partial \,x_i}$ denotes the $i$-component of the gradient $\nabla f$.  For vector fields $u=\left(    u_1, u_2, u_3\right)^T$ with  $u_i\in 
{\rm H}^1(\Omega)\,=\,\{u_i\in {\rm L}^2(\Omega)\, |\, \nabla\, u_i\in {\rm L}^2(\Omega)\}, $  $i=1,2,3$,
we define
$
\mathrm{D} u:=\left(
\nabla\,  u_1\,|\,
\nabla\, u_2\,|\,
\nabla\, u_3
\right)^T.
$
The corresponding Sobolev-space will be also denoted by
$
{\rm H}^1(\Omega)$.  In addition, for a tensor field
$\mathbf{P}$ with rows in ${\rm H}({\rm div}\,; \Omega)$, i.e.,
$
\mathbf{P}=\begin{footnotesize}\begin{footnotesize}\begin{pmatrix}
			\mathbf{P}^T.e_1\,|\,
			\mathbf{P}^T.e_2\,|\,
			\mathbf{P}^T\, e_3
\end{pmatrix}\end{footnotesize}\end{footnotesize}^T$ with $( \mathbf{P}^T.e_i)^T\in {\rm H}({\rm div}\,; \Omega):=\,\{v\in {\rm L}^2(\Omega)\, |\, {\rm div}\, v\in {\rm L}^2(\Omega)\}$, $i=1,2,3$,
we define
$
{\rm Div}\, \mathbf{P}:=\begin{footnotesize}\begin{footnotesize}\begin{pmatrix}
			{\rm div}\, ( \mathbf{P}^T.e_1)^T\,|\,
			{\rm div}\, ( \mathbf{P}^T.e_2)^T\,|\,
			{\rm div}\, ( \mathbf{P}^T\, e_3)^T
\end{pmatrix}\end{footnotesize}\end{footnotesize}^T
$
while for tensor fields $ \mathbf{P}$ with rows in ${\rm H}({\rm curl}\,; \Omega)$, i.e.,
$
\mathbf{P}=\begin{footnotesize}\begin{footnotesize}\begin{pmatrix}
			\mathbf{P}^T.e_1\,|\,
			\mathbf{P}^T.e_2\,|\,
			\mathbf{P}^T\, e_3
\end{pmatrix}\end{footnotesize}\end{footnotesize}^T$ with $( \mathbf{P}^T.e_i)^T\in {\rm H}({\rm curl}\,; \Omega):=\,\{v\in {\rm L}^2(\Omega)\, |\, {\rm curl}\, v\in {\rm L}^2(\Omega)\}
$, $i=1,2,3$,
we define
$
{\rm Curl}\, \mathbf{P}:=\begin{footnotesize}\begin{footnotesize}\begin{pmatrix}
			{\rm curl}\, ( \mathbf{P}^T.e_1)^T\,|\,
			{\rm curl}\, ( \mathbf{P}^T.e_2)^T\,|\,
			{\rm curl}\, ( \mathbf{P}^T\, e_3)^T
\end{pmatrix}\end{footnotesize}\end{footnotesize}^T
.
$

\end{footnotesize}

\end{document}